\documentclass{article}
\usepackage{graphicx} 
\usepackage{amsmath} 
\usepackage{amsfonts}
\usepackage{amsthm}
\theoremstyle{definition}

\usepackage{amssymb}
\usepackage{wrapfig}
\usepackage{lmodern}  
\usepackage[none]{hyphenat} 
\usepackage{graphicx} 
\graphicspath{ {./images/} } 
\usepackage{caption}
\usepackage{subcaption}
\usepackage{booktabs}
\usepackage{parskip} 
\usepackage[export]{adjustbox} 
\usepackage{float} 
\usepackage{comment}
\usepackage{bbm}
\usepackage[pdfencoding=auto, psdextra]{hyperref}
\usepackage{lineno}
\usepackage{fullpage}
\usepackage{authblk}

\usepackage{titlesec}

\usepackage{appendix}
\usepackage[svgnames]{xcolor}
\colorlet{LightGray}{gray!20}
\usepackage{minted}
\numberwithin{equation}{section}
\usepackage{graphicx} 
\usepackage{lmodern}

\newtheorem{prop}{Proposition} 
\newtheorem{lem}[prop]{Lemma}
\newtheorem{thm}[prop]{Theorem}

\newtheorem{remark}[prop]{Remark}
\usepackage{newtxtext, newtxmath}
\usepackage[
    natbib=true,
    style=numeric,
    sorting=none
]{biblatex}
\begin{document}
\title{Geometric desingularisation of the sharp-to-smooth travelling wave transition}

\author{Zak Sattar\thanks{z.sattar@sms.ed.ac.uk}}
\affil{School of Mathematics and Maxwell Institute for Mathematical Sciences, University of Edinburgh, James Clerk Maxwell Building, King’s Buildings, Peter Guthrie Tait Road, Edinburgh EH9 3FD, United Kingdom}

\date{\today}

\maketitle


\begin{abstract}
\noindent
We study travelling front solutions of a family of degenerate Fisher-KPP equations $u_t=(u^n u_x)_x+u(1-u^n)$, where $n$ is a positive integer. At the minimal wave speed $c_{\rm min}=\tfrac{1}{\sqrt{1+n}}$, these equations admit  sharp front solutions, corresponding to an explicit heteroclinic orbit in the travelling wave phase-space. We show how this sharp front is perturbed when the wave speed is increased to $c=\tfrac{1}{\sqrt{1+n}}+\varepsilon$, with $\varepsilon$ sufficiently small.  We use a well-motivated geometric desingularisation (also known as blow-up) near the degenerate equilibrium at the leading edge. By analysing the resulting directional and rescaling charts, we construct a singular heteroclinic orbit connecting the relevant asymptotic states, providing a simple geometric proof of the transition from sharp to smooth travelling fronts.

\end{abstract}


\section{Introduction} \label{intro}
In this paper we consider the family of degenerate Fisher-KPP equations
\begin{equation}
    u_t=(u^n u_x)_x+u(1-u^n),
    \label{pde1}
\end{equation}
where $n$ is a positive integer.
Travelling front solutions for Equation~\eqref{pde1} with $n=1$ have been studied extensively \cite{NEWMAN1980325, ARONSON1980161,  sanchez1994existence, SHERRATT199633, Sherratt2010}.
In $1980$, Newman \cite{NEWMAN1980325} and Aronson \cite{ARONSON1980161} independently wrote down an exact travelling wave solution 
\begin{equation}
u(x,t)=
\begin{cases}
1-e^{1/\sqrt{2}\left(x-t/\sqrt{2}\right)},
& x-t/\sqrt{2}<0,\\
0,
& x-t/\sqrt{2}\geq 0,
\end{cases}
\label{exact solution}
\end{equation}
at the minimal wave speed $c_{\rm min}=\tfrac{1}{\sqrt{2}}$ which satisfies the boundary conditions $U(-\infty)=1$ and $U(\infty)=0$. Equation \eqref{exact solution} is non-smooth at $x=\tfrac{t}{\sqrt{2}}$ and is known as a sharp front. 
Further work concerning front solutions to Equation~\eqref{pde1} includes the paper by S{\'a}nchez-Gardu{\~n}o and Maini \cite{sanchez1994existence}, who used a phase-plane argument to show the existence of a minimal wave speed $c_{\rm min}$, for which there are no front solutions for $c<c_{\rm min}$ and smooth front solutions for $c>c_{\rm min}$, and the two papers by Sherratt \cite{SHERRATT199633,Sherratt2010}, who used singular perturbation theory and matched asymptotics to compute an asymptotic approximation to the smooth front for wave speeds $c=c_{\rm min}+\varepsilon$. 
Thus, Sherratt's work identifies the sharp-to-smooth transition and gives a detailed description of the travelling-wave solutions \cite[Equation $(3.15)$]{Sherratt2010}.

The purpose of this paper is different. We will consider \eqref{pde1} for all positive integers $n$.
We prove the existence of smooth fronts, and give a geometric explanation of how the sharp front unfolds into a smooth front when $c=c_{\rm min}+\varepsilon$, where $c_{\rm min}=\tfrac{1}{\sqrt{1+n}}$ and $\varepsilon\ll 1$. 
Examples of sharp and smooth front solutions to \eqref{pde1} are visualised in Figure~\ref{fig: pde sol}.
\begin{figure}
    \centering
    \includegraphics[scale = 0.51]{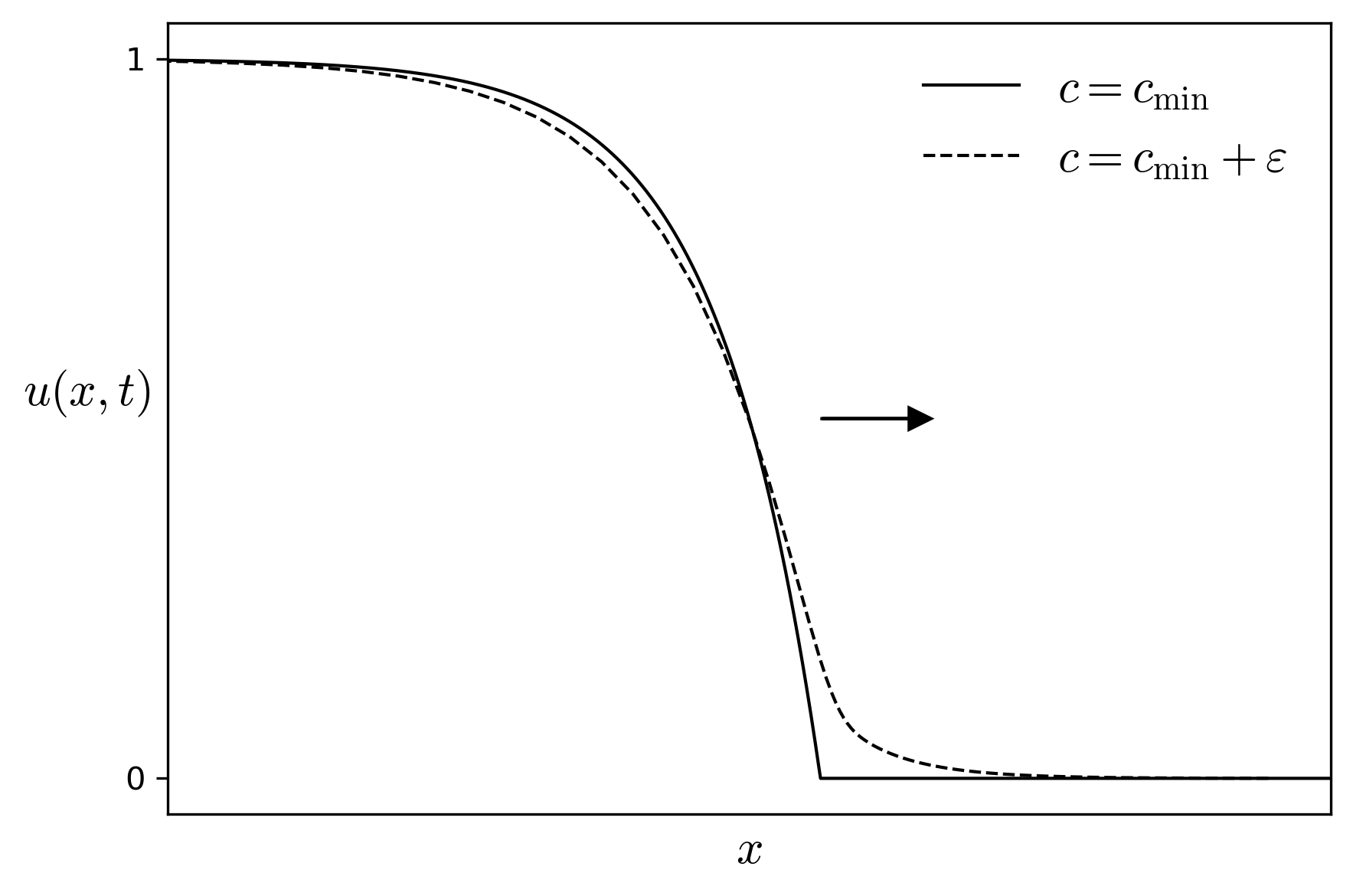}
    \includegraphics[scale = 0.51]{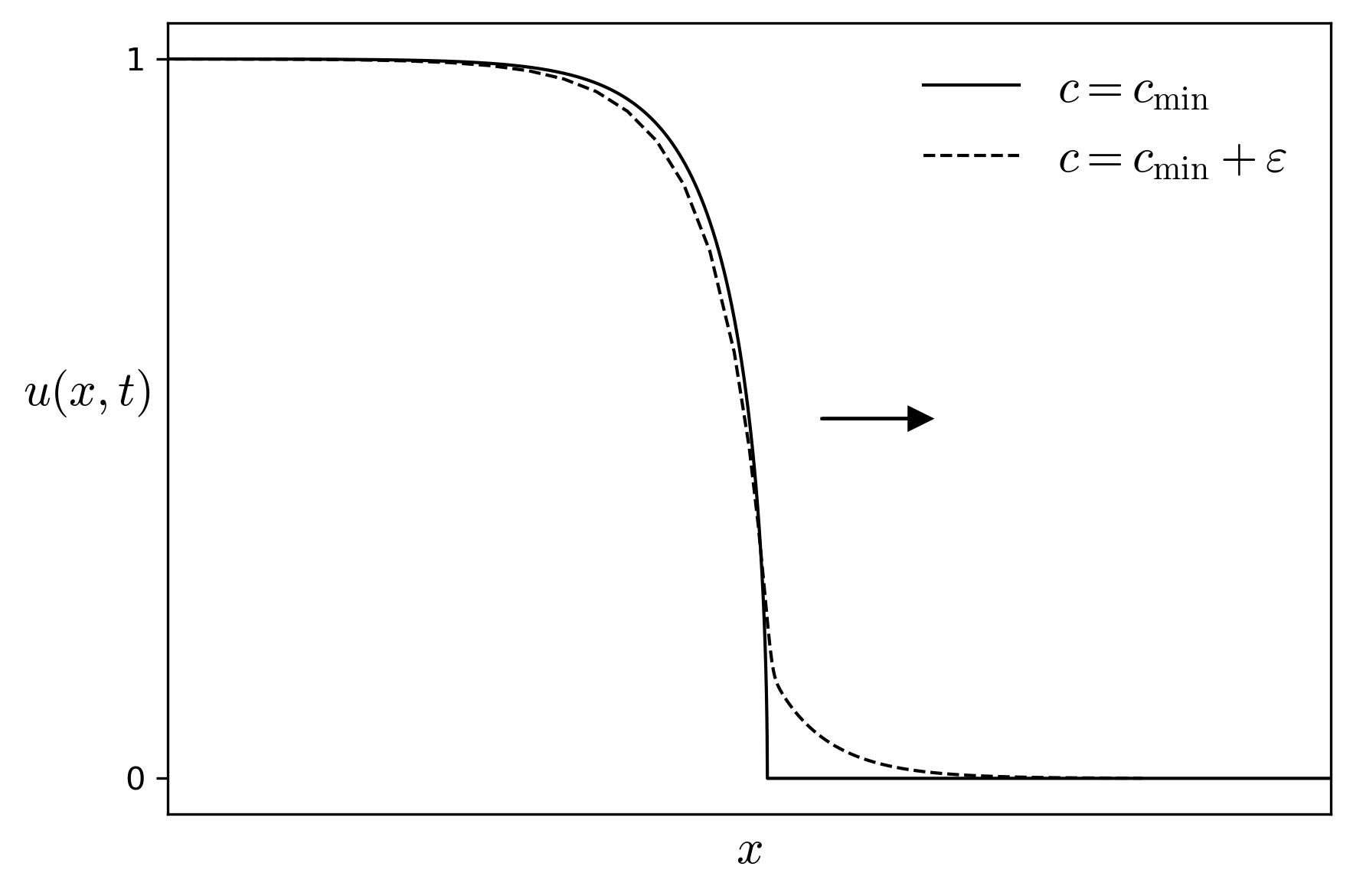}
    \caption{Travelling wave solutions to \eqref{pde1} with $\varepsilon=0.2$. $n=1$ (left), $n=2$ (right).}
    \label{fig: pde sol}
\end{figure}

The perturbation from the minimal speed is singular near the leading edge. In particular, we will show that a regular expansion about the sharp front breaks down when $u(x-ct)=\mathcal{O}(\sqrt[1+n]{\varepsilon})$.
Our proof differs from Sherratt's \cite{Sherratt2010} as we resolve this transition using geometric desingularisation, also known as blow-up. The blow-up method was introduced by Dumortier
and Roussarie \cite{DumortierRoussarie1996} and further developed by Krupa and Szmolyan \cite{KrupaSzmolyan2001}. The method was initially applied to analyse non-hyperbolic points in singular perturbation problems; however, it has since been  applied to other ``singularities", such as non-smooth transitions in vector fields \cite{Dumortier_2007}.

In the blown-up phase-space, the sharp front is replaced by a singular heteroclinic orbit constructed in three coordinate charts. The aim of this paper is to provide a simple geometric proof of the transition from sharp to smooth front solutions for the family of reaction-diffusion equations \eqref{pde1}. The main result of this article is stated below.

\begin{thm}
    For $\varepsilon\in[0, \varepsilon_0)$ with sufficiently small $\varepsilon_0$ and positive integer $n$, the equation \eqref{pde1} admits a smooth monotone front solution connecting $u=1$ and $u=0$, with speed $c=\tfrac{1}{\sqrt{1+n}}+\varepsilon$.
    \label{thm:1}
\end{thm}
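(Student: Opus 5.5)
We reduce to the travelling-wave ODE. Writing $u(x,t)=U(z)$ with $z=x-ct$ gives $(U^nU')'+cU'+U(1-U^n)=0$. We set $V=U^nU'$ and desingularise time by $d\xi=dz/U^n$, which yields the smooth planar system
\begin{equation*}
U_\xi=V,\qquad V_\xi=-cV-U^{n+1}(1-U^n).
\end{equation*}
Its equilibria are $(1,0)$ and $(0,0)$. The point $(1,0)$ is a hyperbolic saddle: the linearisation has determinant $-n<0$. The origin is degenerate, with eigenvalues $-c$ and $0$. Its strong stable direction is $V=-cU$, and its centre direction carries the slow branch $V\approx -U^{n+1}/c$. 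A direct check shows that the curve $V=-kU(1-U^n)$ is invariant exactly when $k=c=\tfrac{1}{\sqrt{1+n}}$. So at $\varepsilon=0$ the unstable manifold of the saddle coincides with the strong stable manifold of the origin, and this heteroclinic orbit is the sharp front. We aim to show that for $\varepsilon>0$ the branch $W^u(1,0)$ entering $\{U>0,V<0\}$ instead reaches the origin tangent to the centre direction, i.e. along $V\sim -U^{n+1}/c$. Along such an orbit $U'=V/U^n\sim -U/c$. Hence $U$ is strictly positive, decays exponentially as $z\to\infty$, and is smooth; this is the smooth front. Monotonicity follows once we show $V<0$ along the orbit.

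The core of the argument is a blow-up of the extended system (adding $\varepsilon'=0$) at $(U,V,\varepsilon)=(0,0,0)$. Guided by the expected breakdown scale $U=\mathcal O(\varepsilon^{1/(1+n)})$ and the slow branch $V\sim U^{n+1}$, we use the weighted blow-up
\begin{equation*}
U=r\bar u,\qquad V=r^{n+1}\bar v,\qquad \varepsilon=r^{n+1}\bar\varepsilon .
\end{equation*}
We work in two charts: the entry/exit chart $K_1$ ($\bar u=1$) and the rescaling chart $K_2$ ($\bar\varepsilon=1$).

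In $K_1$, after division by a suitable power of $r_1$, we expect two equilibria on $\{r_1=0\}$. The first, $p_a$, is the end of the sharp front, where the strong direction $V=-cU$ meets the blow-up sphere. The second, $p_c$, is the point with $v_1=-1/c_{\min}$, corresponding to the centre-manifold branch. We will compute their linearisations. The goal is a saddle-type $p_a$, whose unstable direction lies in the sphere $\{r_1=0\}$, and a $p_c$ carrying a two-dimensional centre-stable manifold transverse to $\{r_1=0\}$ (the family of slow manifolds). The sharp front then enters the sphere at $p_a$.

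In $K_2$ the dynamics on the sphere are given by an explicit $\varepsilon$-independent planar ODE in $(u_2,v_2)$. We will show that the orbit leaving $p_a$ along the sphere connects to $p_c$. This should come from a monotonicity or invariant-region argument: a wedge between $v_2=-c_{\min}u_2$ and $v_2=-u_2^{n+1}/c_{\min}$ is forward invariant and the linear term of size $\bar\varepsilon$ pushes trajectories toward the slow branch. The result is a singular heteroclinic chain: sharp front $\to p_a\to p_c\to$ origin.

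The final step is perturbation. Each piece is hyperbolic or normally hyperbolic in the blown-up space, so standard transversality and Fenichel/centre-manifold persistence show that for $\varepsilon\in(0,\varepsilon_0)$ the perturbed $W^u(1,0)$ enters a neighbourhood of $p_a$ and follows the chain. It then lies in the basin of the slow manifold at $p_c$ and blows down to the desired smooth orbit. The main obstacle is the passage near $p_a$: it is a corner point where the sharp front and the sphere meet. We must show that the perturbed orbit is carried along the connection in $K_2$ and does not escape to $V=-\infty$ or cross $V=0$. We will first try a direct sign computation of the $\partial_\varepsilon$-derivative of the unstable manifold, a Melnikov-type quantity along $V=-c_{\min}U(1-U^n)$. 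This should show that $W^u$ moves into the side where $V>-cU$ near the origin. That side is trapped by the invariant region $\{U>0,\,-cU<V<0\}$, which gives both convergence to the origin and $V<0$, i.e. monotonicity.
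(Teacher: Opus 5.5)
Your reduction ($V=U^nU'$, $d/d\xi=U^n\,d/dz$), the saddle at $(1,0)$, the saddle-node at the origin, the invariance of $V=-kU(1-U^n)$ exactly for $k=c_{\min}$, and the target (make $W^u(1,0)$ reach the origin along the centre branch) are all correct. The gap is that neither of your two mechanisms for reaching that target works as stated.

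\textbf{The blow-up does not resolve anything.} With weights $(1,n+1,n+1)$, the parameter enters only through $-\varepsilon V$, which has weight $2(n+1)$, so it disappears on the sphere. In $K_1$ you get $\dot r_1=r_1^{n+1}v_1$, $\dot v_1=-c_{\min}v_1-1+\mathcal{O}(r_1^n)$ and $\dot\varepsilon_1=-(n+1)r_1^nv_1\varepsilon_1$. So $\{r_1=0\}$ carries only the line $v_1=-1/c_{\min}$ (your $p_c$), and there is no $p_a$.

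Along the sharp front $v_1=V/U^{n+1}\to-\infty$, so it meets the sphere at the pole $(\bar u,\bar v,\bar\varepsilon)=(0,-1,0)$, which lies outside $K_1\cup K_2$. In $K_2$ the sphere dynamics are $\dot u_2=0$, $\dot v_2=-c_{\min}v_2-u_2^{n+1}$: straight fast fibres onto the critical curve. There is no $\bar\varepsilon$-term pushing anything and no distinguished connection.

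Because $\varepsilon$ is invisible on the sphere, orbits passing just above $W^{ss}_c(0)$ (which you want) and just below it (which cross $U=0$) have the same singular limit there. The blow-up therefore cannot decide which side $W^u(1,0)$ takes. That is fixed by the $\mathcal{O}(\varepsilon)$ splitting accumulated along the whole outer front, i.e.\ by the computation you defer to your last paragraph.

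\textbf{The closing step fails as written.} The set $\{U>0,\,-cU<V<0\}$ is not forward invariant: on $V=-cU$ one has $\tfrac{d}{d\xi}(V+cU)=-U^{n+1}(1-U^n)<0$. Indeed $W^{ss}_c(0)=\{V=-cU+\tfrac{U^{n+1}}{(1+n)c}+\dots\}$ lies above that line. Being above $V=-cU$ near the origin therefore does not give convergence along the centre direction.

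\textbf{The repair, which makes the blow-up unnecessary in your coordinates,} is to use the sharp front itself as the barrier.
\begin{itemize}
\item Let $G=V+c_{\min}U(1-U^n)$. On $\{G=0\}$, $\dot G=-\varepsilon V=\varepsilon c_{\min}U(1-U^n)>0$; the $U^nV$ terms cancel because $(1+n)c_{\min}^2=1$.
\item On $\{V=0,\,0<U<1\}$, $\dot V<0$.
\item The unstable slope at $(1,0)$, $\lambda_+(c)=\tfrac12(-c+\sqrt{c^2+4n})$, is strictly smaller than $\lambda_+(c_{\min})=nc_{\min}$, which is the slope of $\{G=0\}$ at $U=1$.
\end{itemize}
Hence $W^u(1,0)$ enters $R=\{0<U<1,\ G>0,\ V<0\}$ and stays there, and $U$ decreases to $0$. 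Moreover $V/U>-c_{\min}(1-U^n)\to-c_{\min}>-c$, which rules out $W^{ss}_c(0)$. So the orbit follows the centre branch $V\sim-U^{n+1}/c$, giving your $U_z\sim-U/c$, and $V<0$ gives monotonicity. This proves the result for every $c>c_{\min}$ by an elementary phase-plane argument.

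\textbf{Comparison with the paper.} The paper works with $U^{n-1}U'=V/U$, i.e.\ in the $U$-directional blow-up of your origin. There your strong-stable and centre directions become the distinct equilibria $(0,-\varepsilon)$ and $(0,\tfrac{1}{\sqrt{1+n}})$, so the sharp and smooth fronts end at different points. That is why the paper needs the $(1,n,n+1)$ blow-up and three charts. What that construction buys is the scale structure ($U\sim\varepsilon^{1/(1+n)}$, then $U\sim\varepsilon$) and the constant $\alpha(n)$, not existence itself.
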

\begin{remark}
    The choice $c=\tfrac{1}{\sqrt{1+n}}+\varepsilon$ is made only for convenience. Any perturbation $c=\tfrac{1}{\sqrt{1+n}}+\mu(\varepsilon)$ with $\mu(\varepsilon)\to 0^+$ as $\varepsilon\to0$ yields a smooth front; such a choice merely rescales $\varepsilon$ and leaves the proof of this result unchanged.
\end{remark}

\section{Geometric analysis}
We now perform some preliminary analysis of \eqref{pde1}, before motivating and introducing the geometric desingularisation required to prove Theorem~\ref{thm:1}.

Introducing the travelling wave variable $\eta=x-ct$ and writing $u(x,t)=U(\eta)$, equation \eqref{pde1} becomes
\begin{equation}
    -cU'=(U^nU')'+U(1-U^n).
    \label{tw ode}
\end{equation}
We define $V=U^{n-1}U'$ so that we obtain the system of equations
\begin{equation}
    \begin{aligned}
        U'&=U^{1-n}V,\\
        U^n V'&=-cV-V^2-U^n(1-U^n).
        \label{singular ode}
    \end{aligned}
\end{equation}
We seek solutions which satisfy the boundary conditions $U(-\infty)=1$ and $U(\infty)=0.$ To remove the singularity at $U=0$, we introduce the desingularised variable $\xi$, which is defined through $\tfrac{d}{d\xi}=U^n\tfrac{d}{d\eta}$. Then, system \eqref{singular ode} becomes
\begin{equation}
    \begin{aligned}
        \dot{U} &=UV,\\
        \dot{V} &=-cV-V^2-U^n(1-U^n),
        \label{non-singular ode}
    \end{aligned}
\end{equation}
where the overdot denotes differentiation with respect to the new independent variable $\xi$.

Equation~\eqref{non-singular ode} has the relevant equilibrium points at $(0,0)$, $(0, -c)$, and $(1,0)$. 
For $c=\tfrac{1}{\sqrt{1+n}},$ the sharp front \eqref{exact solution} corresponds to a heteroclinic orbit connecting $(0, -c)$ and $(1,0)$ given by $V(U)=-\tfrac{1}{\sqrt{1+n}}(1-U^n)$.
The sharp front \eqref{exact solution} can then be obtained by integrating $U'=-\tfrac{1}{\sqrt{1+n}}U^{1-n}(1-U^n)$, giving $U(\eta)=\left(1-e^{ \tfrac{n}{\sqrt{1+n}}\eta}\right)^{1/n} $ for $\eta< 0$, and $U(\eta)=0$ for $\eta\geq 0$.

\begin{remark}[Regularity of the fronts]
The sharp  ($\varepsilon=0$) and perturbed ($\varepsilon>0$) fronts differ in regularity. For $\varepsilon=0$ the profile
$U(\eta)$ has compact support and meets $U=0$ at the finite point $\eta=0$. Near $\eta=0^-$ one has $U\sim\left(-\tfrac{n}{\sqrt{1+n}}\eta\right)^{1/n}$,
giving $U'\sim -\tfrac{1}{\sqrt{1+n}}U^{1-n}$, so that for $n=1$ the front is $C^{0, 1}$
with a corner ($U'$ jumps from $-\tfrac{1}{\sqrt{2}}$ to $0$), while for $n\ge2$
the gradient becomes unbounded, $U'(\eta)\to-\infty$ as $\eta\to0^-$, and $U$ is only
$C^{0, 1/n}$. By contrast, for $\varepsilon>0$ the
front is strictly positive for all $\eta$, and is $C^\infty$. The
increase in wave speed regularises the leading edge and removes the sharp
interface for every $n\ge1$.
\end{remark}

We will now study the smooth waves obtained by solving \eqref{singular ode} with $c=\tfrac{1}{\sqrt{1+n}}+\varepsilon$.

To study smooth front solutions to \eqref{pde1} we perform the preliminary scaling $W=V+\tfrac{1}{\sqrt{1+n}}$, which shifts $\left(0, -c\right)$ to $Q^0\equiv(0, -\varepsilon)$ (or $(0,0)$ in the singular limit), then we set $c=\tfrac{1}{\sqrt{1+n}}+\varepsilon$ and append the trivial $\dot{\varepsilon}=0$, so that \eqref{non-singular ode} becomes
\begin{equation}
    \begin{aligned}
        \dot{U} &=U\left(W-\tfrac{1}{\sqrt{1+n}}\right),\\
        \dot{W} &=\tfrac{1}{\sqrt{1+n}}\varepsilon+\left(\tfrac{1}{\sqrt{1+n}}-\varepsilon\right)W-W^2-U^n(1-U^n),\\
        \dot{\varepsilon}&=0.
        \label{ode epsilon}
    \end{aligned}
\end{equation}
For smooth wavefronts, we require
$$U(-\infty)=1,\;W(-\infty)=\tfrac{1}{\sqrt{1+n}} \;\;\text{and}\;\; U(\infty)=0,\;W(\infty)=\tfrac{1}{\sqrt{1+n}}.$$
\begin{remark}
As $W=V+\tfrac{1}{\sqrt{1+n}}$ and $U^{n-1} U'=V,$
we claim that a smooth front satisfies
$V(\infty)=U^{n-1}(\infty)U'(\infty)=0.$ However, for $n>1$, the condition $V(\infty)=0$ alone does not imply $U'(\infty)=0$, since we could have $U^{n-1}(\infty)=0.$ The required decay rate is given by the
centre-manifold expansion near $(U,W)=\left(0, \tfrac{1}{\sqrt{1+n}}\right)$: $V=W-\tfrac{1}{\sqrt{1+n}}=-\tfrac{1}{\tfrac{1}{\sqrt{1+n}}+\varepsilon}U^n+\mathcal{O}(U^{2n}).$
Consequently,
$U'=-\tfrac{1}{\tfrac{1}{\sqrt{1+n}}+\varepsilon}U+\mathcal{O}(U^{n+1}),$
and therefore,  $U'(\infty)=0$ when $U(\infty)=0$.
\end{remark}

Our goal is to construct a singular heteroclinic orbit $\Gamma$, connecting $Q^-\equiv\left(1,\tfrac{1}{\sqrt{1+n}}\right)$ and $Q^+\equiv\left(0,\tfrac{1}{\sqrt{1+n}}\right)$ in blown-up space.
We will now motivate why this cannot be done in ``regular" phase-space.

For $\varepsilon=0$, \eqref{ode epsilon} has an exact heteroclinic orbit which is the union of $W^{\rm u}(Q^-)$ and $W^{\rm s}(Q^0)$. This orbit is simply given by $W(U)=\tfrac{1}{\sqrt{1+n}}U^n$, and corresponds to the sharp front solution of \eqref{pde1}. 
We now consider \eqref{ode epsilon} with sufficiently small $\varepsilon>0$ and introduce the naive ansatz $W(U)=\tfrac{1}{\sqrt{1+n}}U^n+\varepsilon h_1(U)+\mathcal{O}(\varepsilon^{2})$.
We substitute the ansatz into the invariance equation $\dot{W}=\tfrac{dW}{dU}\dot{U}$, and collect terms of order $\varepsilon$ to obtain 
\begin{equation}
    \frac{d}{dU}h_1(U)=\frac{1-U^n+(1-(2+n)U^n)h_1(U)}{U(U^n-1)},
\end{equation}
which we solve with the condition $h_1(1)=0$, giving $h_1(U)=\tfrac{\int_U^1(1-s^n)^{\tfrac{1+n}{n}}ds}{U(1-U^n)^{\tfrac{1+n}{n}}}$. As $U\to 0$, $h_1(U)\to \tfrac{\alpha(n)}{U}$, where
$\alpha(n)\equiv\int_0^1(1-s^n)^{\tfrac{1+n}{n}}ds$. Specifically,  $\alpha(1)=\tfrac{1}{3},\alpha(2)=\tfrac{3\pi}{16},\alpha(3)= \tfrac{2}{15}\tfrac{\Gamma\left(\tfrac{1}{3}\right)^2}{\Gamma\left(\tfrac{2}{3}\right)}$, furthermore, $\alpha(n+1)>\alpha(n)$ and $\alpha(n)\to1$ as $n\to \infty$. 
The perturbed orbit is then given by 
\begin{equation}
    W(U)=\frac{1}{\sqrt{1+n}}U^n+\varepsilon \tfrac{\int_U^1(1-s^n)^{\frac{1+n}{n}}ds}{U(1-U^n)^{\tfrac{1+n}{n}}}+\mathcal{O}(\varepsilon^2).
    \label{perturbed orbit}
\end{equation}
Equation \eqref{perturbed orbit} becomes singular as $U\to0$, and the leading-order terms balance, $\tfrac{1}{\sqrt{1+n}}U^n\sim \varepsilon \tfrac{\alpha(n)}{U}$ when $U=\mathcal{O}(\sqrt[1+n]{\varepsilon})$. As $W\sim U^n$ along the orbit for $\varepsilon=0$, we similarly find $W=\mathcal{O}(\sqrt[1+n]{\varepsilon^n})$.
In Figure~\ref{fig: vectorfields}, we display the phase-plane dynamics of \eqref{ode epsilon} for $n=1$.
In particular, we plot the heteroclinic orbits corresponding to the smooth and sharp fronts in Figure~\ref{fig: pde sol}.

\begin{figure}
    \centering
    \includegraphics[scale = 0.6]{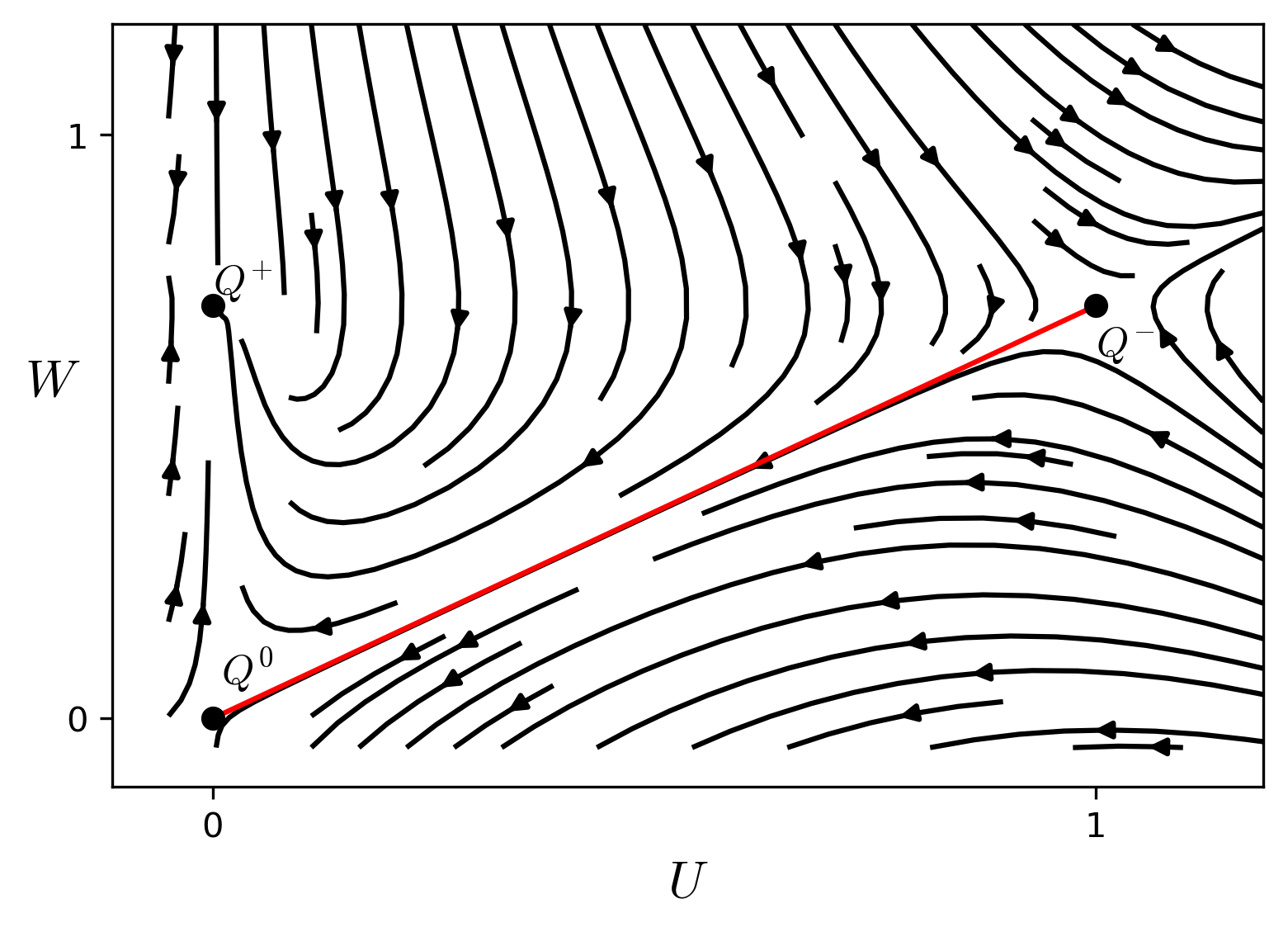}
     \includegraphics[scale = 0.6]{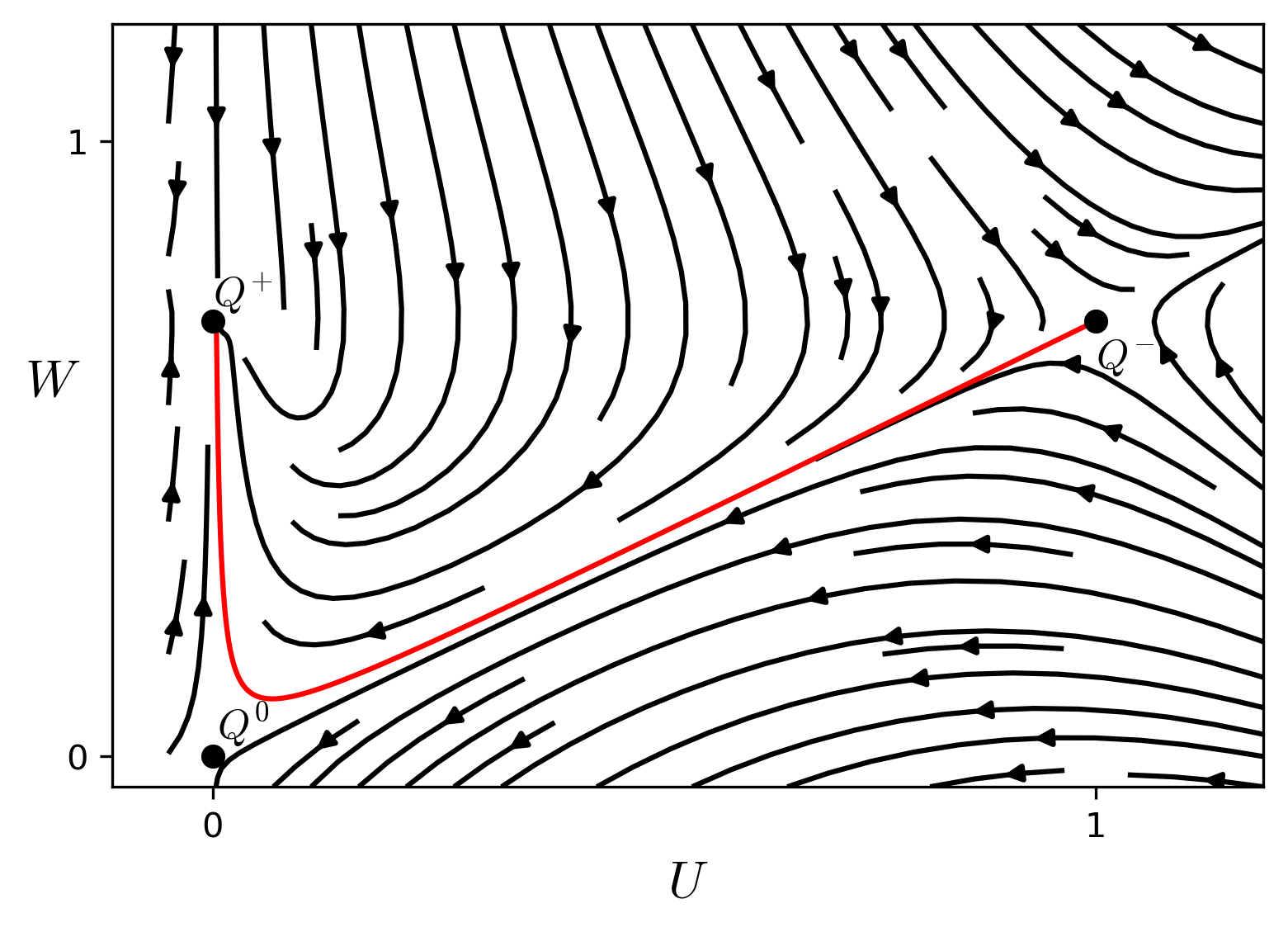}
    \caption{The phase-plane dynamics of \eqref{ode epsilon} for $n=1$, with $\varepsilon=0$ (left) and $\varepsilon=0.01$ (right).}
    \label{fig: vectorfields}
\end{figure}

\subsection{Geometric desingularisation}
As the approximation of $W^{\rm u}(Q^-)$ \eqref{perturbed orbit} breaks down when $U=\mathcal{O}(\sqrt[1+n]{\varepsilon})$ and $W=\mathcal{O}(\sqrt[1+n]{\varepsilon^n})$, we introduce the following blow-up transformation (geometric desingularisation) to resolve the non-smooth transition in a neighbourhood of $(U,W,\varepsilon)=(0,0,0)$
\begin{equation}
    U=\bar{r}\bar{u}, \;\; W=\bar{r}^n\bar{w},\;\; \varepsilon=\bar{r}^{1+n}\bar{\varepsilon}.
    \label{blow-up}
\end{equation}
Here, $(\bar{u}, \bar{w},  \bar{\varepsilon})\in \mathbb{S}_+^2:=\{(\bar{u},\bar{w}, \bar{\varepsilon})\:|\; \bar{u}^2+\bar{w}^2+\bar{\varepsilon}^2=1\}\cap\{\bar{\varepsilon}\geq 0\}$, with $\bar{r}\in[0, r_0]$ for $r_0>0$ sufficiently small.
The blow-up identifies a weighted phase-space scaling which resolves the singular transition and reduces the matching to simple explicit singular orbits.

We require three charts to describe all of the dynamics. Specifically, we need a chart to cover the regimes where $U=W=\mathcal{O}(1)$, then $U=\mathcal{O}(\sqrt[1+n]{\varepsilon}), W=\mathcal{O}(\sqrt[1+n]{\varepsilon^n})$, and finally  $U=\mathcal{O}(\varepsilon)$ and $W=\mathcal{O}(1)$. 
The first chart $K_1$ is defined by setting $\bar{u}=1$, so that \eqref{blow-up} becomes
\begin{equation}
    U=r_1, \;\; W=r_1^nw_1,\;\; \varepsilon=r_1^{1+n}\varepsilon_1,
    \label{blow-up in K1}
\end{equation}
The second chart $K_2$ is defined by setting 
$\bar{\varepsilon}=1$, then \eqref{blow-up} becomes 
\begin{equation}
    U=r_2u_2, \;\; W=r_2^nw_2,\;\; \varepsilon=r_2^{1+n},
    \label{blow-up in K2}
\end{equation}
and the final chart $K_3$ is defined by setting 
$\bar{w}=1$, in which case \eqref{blow-up} becomes 
\begin{equation}
    U=r_3u_3, \;\; W=r_3^n,\;\; \varepsilon=r_3^{1+n}\varepsilon_3.
    \label{blow-up in K3}
\end{equation}
The transition maps between the charts are given by 
\begin{equation}
    \kappa_{12}:K_1\to K_2,\;\;(u_2, w_2, r_2)=\left(\tfrac{1}{\sqrt[1+n]{\varepsilon_1}}, \tfrac{w_1}{\sqrt[1+n]{\varepsilon_1^n}}, r_1\sqrt[1+n]{\varepsilon_1}\right)
    \label{transion map kappa_{12}}
\end{equation}
and \begin{equation}
    \kappa_{23}:K_2\to K_3,\;\;(u_3, r_3, \varepsilon_3)=\left(\tfrac{u_2}{\sqrt[n]{w_2}}, r_2\sqrt[n]{w_2}, \tfrac{1}{\left(\sqrt[n]{w_2}\right)^{1+n}}\right).
    \label{transion map kappa_{23}}
\end{equation}
\begin{remark}
    Sherratt \cite{Sherratt2010} also required three ``rescalings" to resolve the singularity at the leading edge for \eqref{pde1} with $n=1$. In particular, he constructed two outer solutions away from the singularity and a corner solution to approximate the smooth front. However, Sherratt takes $u=\mathcal{O}(\varepsilon)$, whereas for $n=1$, we take $u=\mathcal{O}(\sqrt{\varepsilon})$.
\end{remark}
In each of the following subsections, we will analyse the dynamics in the charts $K_i$, with $i=1,2,3$, and construct the pieces of the singular heteroclinic orbit $\Gamma$. Following this, we will give a simple perturbation argument to show how $\Gamma$ persists for sufficiently small $\varepsilon$.

\begin{remark}
For any object $\Box$ in $(U,W,\varepsilon)$-space, we denote the corresponding blown-up object by $\overline{\Box}$. Moreover, in chart $K_i$, with $i=1,2,3$, that object will be denoted by $\Box_i$.
\end{remark}

\subsection{$U-$directional chart $K_1$}
In this subsection, we study the dynamics of \eqref{ode epsilon} in the regime $U=\mathcal{O}(1)$. To that end, we consider the $U-$directional chart $K_1$ \eqref{blow-up in K1}. 
Applying \eqref{blow-up in K1} to \eqref{ode epsilon} gives the system 
\begin{equation}
    \begin{aligned}
        \dot{r}_1&=r_1\left(r_1^nw_1-\tfrac{1}{\sqrt{1+n}}\right),\\
        \dot{w}_1&=\tfrac{1}{\sqrt{1+n}}r_1\varepsilon_1+\left(\tfrac{1}{\sqrt{1+n}}-r_1^{1+n}\varepsilon_1\right)w_1-r_1^nw_1^2-(1-r_1^n)-n\left(r_1^nw_1-\tfrac{1}{\sqrt{1+n}}\right)w_1,\\
        \dot{\varepsilon}_1&=-(1+n)\varepsilon_1\left(r_1^nw_1-\tfrac{1}{\sqrt{1+n}}\right).
        \label{K_1}
    \end{aligned}
\end{equation}
Our goal for this subsection is to construct $\Gamma_1$, which is the portion of the singular orbit found in $K_1$.
First notice that \eqref{K_1} admits the line of equilibria  $\ell_1^-=\left\{\left(1, \tfrac{1}{\sqrt{1+n}}, \varepsilon_1\right)\;\bigg|\;\varepsilon_1\in[0, \varepsilon_0)\right\}$. For each fixed $r_1^{1+n}\varepsilon_1=\varepsilon$, $\ell_1^-$ corresponds to the point $Q^-$ before blow-up. As our goal is to construct a singular (in $\varepsilon$) heteroclinic orbit, we identify the point $Q_1^-=\left(1, \tfrac{1}{\sqrt{1+n}}, 0\right)$, obtained by taking $\varepsilon_1\to0$ on $\ell_1^-$. Furthermore, we identify the point $P_1=\left(0,\tfrac{1}{\sqrt{1+n}} ,0\right)$ which corresponds to a projection of $Q^0$ before blow-up. A straightforward calculation gives the following result.
\begin{lem} 
    The equilibrium $Q_1^-$ has the eigenvalues $\tfrac{n}{\sqrt{1+n}}, -\sqrt{1+n}, 0$. The corresponding eigenvectors are $(1,0,0)^T$, $\left(1,-\tfrac{1+2n}{\sqrt{1+n}},0\right)^T$, $(0,0,1)^T$.
    The equilibrium $P_1$ has the eigenvalues $-\tfrac{1}{\sqrt{1+n}}, \sqrt{1+n}, \sqrt{1+n}$. The corresponding eigenvectors are $(1,0,0)^T,(0,1,0)^T, (0,0,1)^T$.
\end{lem}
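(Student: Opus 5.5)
The plan is to linearise the chart-$K_1$ system \eqref{K_1} directly at the two points. I will first check that each is an equilibrium, then compute the $3\times 3$ Jacobian and show it is (block-)triangular, so that the eigenvalues can be read off the diagonal. Throughout I write $a=\tfrac{1}{\sqrt{1+n}}$ and use the identity $(1+n)a^2=1$ repeatedly; this identity is the only non-trivial input.

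\emph{Equilibrium check.} At $Q_1^-=(1,a,0)$ we have $r_1^nw_1-a=0$, so $\dot r_1=\dot\varepsilon_1=0$. Also
\[
\dot w_1=aw_1-w_1^2-(1-1)-0=a^2-a^2=0 .
\]
At $P_1=(0,a,0)$ we have $\dot r_1=0$ and $\dot\varepsilon_1=0$ trivially, while
\[
\dot w_1=aw_1-1+naw_1=(1+n)a^2-1=0 .
\]

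\emph{Jacobian at $Q_1^-$.} The $\dot r_1$ row is
\[
\bigl(r_1^nw_1-a+nr_1^nw_1,\; r_1^{n+1},\; 0\bigr)=(na,1,0).
\]
In the $\dot w_1$ row, the $r_1$-derivative at $\varepsilon_1=0$ is
\[
-nw_1^2+n-n^2w_1^2=n-n(1+n)a^2=0 .
\]
The $w_1$-derivative is
\[
a-2w_1-2nw_1+na=-(1+n)a=-\sqrt{1+n}.
\]
The $\varepsilon_1$-derivative is $ar_1-r_1^{n+1}w_1=a-a=0$. Every entry of the $\dot\varepsilon_1$ row carries a factor $\varepsilon_1$ or $(r_1^nw_1-a)$, both of which vanish at $Q_1^-$, so that row is zero. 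Hence
\[
J(Q_1^-)=\begin{pmatrix} na & 1 & 0\\ 0 & -\sqrt{1+n} & 0\\ 0&0&0\end{pmatrix},
\]
which is upper triangular with eigenvalues $\tfrac{n}{\sqrt{1+n}},\,-\sqrt{1+n},\,0$.

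The eigenvectors for the first and last eigenvalues are clearly $(1,0,0)^T$ and $(0,0,1)^T$. For $-\sqrt{1+n}$, the vector $(1,y,0)^T$ must satisfy $(na+\sqrt{1+n})+y=0$. This gives
\[
y=-\tfrac{n+(1+n)}{\sqrt{1+n}}=-\tfrac{1+2n}{\sqrt{1+n}},
\]
as claimed.

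\emph{Jacobian at $P_1$.} Here $r_1=0$, and the $\dot r_1$ row is $(-a,0,0)$. The $\dot\varepsilon_1$ row is $(0,0,-(1+n)(-a))=(0,0,\sqrt{1+n})$. In the $\dot w_1$ row, the $w_1$-derivative is $a+na=\sqrt{1+n}$. The $\varepsilon_1$-derivative is $ar_1-r_1^{n+1}w_1=0$.

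The only entry needing care is $\partial\dot w_1/\partial r_1$ at $r_1=0$, because terms of the form $r_1^{n}$ contribute a nonzero derivative when $n=1$. So I will split into cases.
\begin{itemize}
\item For $n\ge 2$, every term has $r_1$-derivative proportional to a positive power of $r_1$ (or to $\varepsilon_1$), so the entry is $0$.
\item For $n=1$, the entry is $a\varepsilon_1-w_1^2+1-w_1^2=1-2a^2=0$, again using $(1+n)a^2=1$.
\end{itemize}
Thus $J(P_1)=\mathrm{diag}(-a,\sqrt{1+n},\sqrt{1+n})$, which gives the stated eigenvalues with the standard basis vectors as eigenvectors.

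This case check for the $n=1$ cross term is the only step where an error could slip in. Everything else is mechanical once the identity $(1+n)a^2=1$ is used to kill the off-diagonal entries.
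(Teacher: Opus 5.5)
Your proposal is correct and follows the same route as the paper, which only says the result follows from ``a straightforward calculation'', i.e.\ exactly the direct Jacobian computation you carry out; all your entries check out, including the $n=1$ cross term $\partial\dot w_1/\partial r_1=1-2a^2=0$ at $P_1$. Your observation that $J(P_1)$ is diagonal is also what justifies taking the standard basis vectors as eigenvectors for the repeated eigenvalue $\sqrt{1+n}$, since it rules out a Jordan block.
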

As $\varepsilon=r_1^{1+n}\varepsilon_1$, there are two limiting systems, namely $\varepsilon_1\to0$ and $r_1\to0$. We first take $\varepsilon_1\to 0$ in \eqref{K_1} to obtain 
\begin{equation}
    \begin{aligned}
        \dot{r}_1&=r_1\left(r_1^n w_1-\tfrac{1}{\sqrt{1+n}}\right),\\
        \dot{w}_1&=\sqrt{1+n}\;w_1-1+r_1^n(1-(1+n)w_1^2).
        \label{K_1 eps_1=0}
    \end{aligned}
\end{equation}
It is obvious that the invariant line $w_1(r_1)=\tfrac{1}{\sqrt{1+n}}$, which we call $\Gamma_1^-$, lies in both $W_1^{\rm u}(Q_1^-)$ and $W_1^{\rm s}(P_1)$. 
The second limiting system is obtained by taking $r_1\to0$ in \eqref{K_1} which yields
\begin{equation}
    \begin{aligned}
        \dot{w}_1&=\sqrt{1+n}w_1-1,\\
        \dot{\varepsilon}_1&=\sqrt{1+n}\varepsilon_1.
        \label{K_1, r_1=0}
    \end{aligned}
\end{equation}
The second portion of the singular orbit in chart $K_1$, which we call $\Gamma_1^+$, must be backward asymptotic to $P_1$ and can be computed by solving 
$\tfrac{d {w}_1}{d{\varepsilon}_1}=\tfrac{\sqrt{1+n}w_1-1}{\sqrt{1+n}\varepsilon_1}$ to obtain $w_1(\varepsilon_1)=\tfrac{1}{\sqrt{1+n}}+\sigma\varepsilon_1$, where $\sigma$ is a constant of integration. To determine $\sigma$, we recall that a perturbation of $W^{\rm u}(Q^-)$ is given by \eqref{perturbed orbit}.  Applying \eqref{blow-up in K1} to \eqref{perturbed orbit}, we find that 
\begin{equation}
    w_1(r_1, \varepsilon_1)=\tfrac{1}{\sqrt{1+n}}+\varepsilon_1
    \frac{\displaystyle \int_{r_1}^1 (1-s^n)^{\tfrac{1+n}{n}}ds}{(1-r_1^n)^{\tfrac{1+n}{n}}}+\mathcal{O}(r_1^{2+n}\varepsilon_1^2), 
    \label{w_1(r_1, eps_1)}
\end{equation}  
then taking $r_1\to0$ gives $w_1(0,\varepsilon_1)=\tfrac{1}{\sqrt{1+n}}+\alpha(n)\varepsilon_1$,
and therefore, $\sigma=\alpha(n)$.

We introduce the section $\Sigma_{1}^{\rm out}=\{(r_1, w_1, \rho)\;|\;(r_1, w_1)\in[0, r_0]\times[0, w_0]\}$, where $\rho$ is a fixed small constant, to track $\Gamma_1=\Gamma_1^-\cup\Gamma_1^+$ as it leaves the chart $K_1$. The exit point from the chart $K_1$ is given by $P_1^{\rm out}=\Gamma_1^+\cap\Sigma_1^{\rm out}=\left(0, \tfrac{1}{\sqrt{1+n}}+\alpha(n)\rho,  \rho\right)$.
The geometry and dynamics in chart $K_1$ for $n=1$ are shown in Figure~\ref{fig: K_1}.

\begin{figure}
    \centering
    \includegraphics[scale = 1.2]{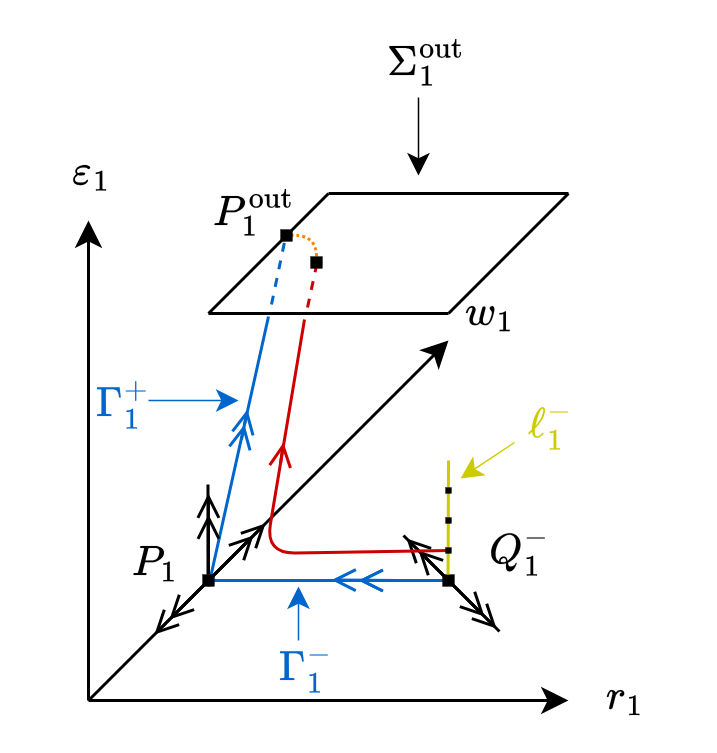}
    \caption{The geometry in chart $K_1$ for $n=1$.}
    \label{fig: K_1}
\end{figure}

\subsection{Rescaling chart $K_2$}
In this subsection, we study the dynamics of \eqref{ode epsilon} in the regime $U=\mathcal{O}(\sqrt[1+n]{\varepsilon})$ and $W=\mathcal{O}(\sqrt[1+n]{\varepsilon^n})$. To that end, we consider the rescaling chart $K_2$ \eqref{blow-up in K2}.
Applying \eqref{blow-up in K2} to \eqref{ode epsilon} gives 
\begin{equation}
    \begin{aligned}
        \dot{u}_2&=u_2\left(r_2^nw_2-\tfrac{1}{\sqrt{1+n}}\right),\\
        \dot{w}_2&=\tfrac{1}{\sqrt{1+n}}r_2+\left(\tfrac{1}{\sqrt{1+n}}-r_2^{1+n}\right)w_2-r_2^nw_2^2-u_2^n(1-r_2^nu_2^n),\\
        \dot{r}_2&=0.
        \label{K_2}
    \end{aligned}
\end{equation}
Our goal for this subsection is to construct the continuation of $\Gamma_1$ in the chart $K_2$. We shall call this orbit $\Gamma_2$. Notice that \eqref{K_2} admits the line of equilibria $\ell_2^0=(0, -r_2, r_2)$. For each fixed $r_2^{1+n}=\varepsilon$, $\ell_2^0$ corresponds to the point $Q^0$ before blow-up. Furthermore, we identify the point $Q_2^0$ obtained by taking $r_2\to 0$ on $\ell_2^0$. An easy calculation reveals the following.
\begin{lem}
    The origin $Q^0_2=(0,0,0)$ has the eigenvalues $-\tfrac{1}{\sqrt{1+n}}$, $\tfrac{1}{\sqrt{1+n}}, 0$. For $n=1$
    the corresponding eigenvectors are  $\left(1, \tfrac{1}{\sqrt{2}}, 0\right)^T$, $(0,1, 0)^T$, $(0, -1, 1)^T$; for $n>1$, the eigenvectors are $(1,0,0)^T$, $(0,1,0)^T$ and $(0,-1,1)^T$. 
    \label{lem: K_2 linear}
\end{lem}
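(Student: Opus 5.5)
The plan is a direct linearisation of \eqref{K_2} at the origin, followed by solving for the eigenvectors one eigenvalue at a time. The only point needing care is the term $-u_2^n(1-r_2^nu_2^n)$: its derivative with respect to $u_2$ at $u_2=0$ equals $-1$ when $n=1$ and vanishes when $n>1$. This is what splits the statement into the two cases.

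Write $a=\tfrac{1}{\sqrt{1+n}}$ and order the variables as $(u_2,w_2,r_2)$. I will differentiate each right-hand side of \eqref{K_2} and evaluate at $(0,0,0)$.
\begin{itemize}
\item The $\dot u_2$ equation is $u_2(r_2^nw_2-a)$. Its only nonzero partial derivative at the origin is $\partial_{u_2}=-a$.
\item In the $\dot w_2$ equation, the linear term $a r_2$ contributes $\partial_{r_2}=a$, and the term $(a-r_2^{1+n})w_2$ contributes $\partial_{w_2}=a$. The terms $r_2^nw_2^2$ and $r_2^{1+n}w_2$ are at least quadratic, so they contribute nothing. The term $-u_2^n(1-r_2^nu_2^n)$ contributes $\partial_{u_2}=-\delta$, where $\delta=1$ for $n=1$ and $\delta=0$ for $n>1$.
\item The $\dot r_2$ equation is identically zero.
\end{itemize}
This gives the Jacobian
\begin{equation*}
J=\begin{pmatrix}-a&0&0\\ -\delta&a&a\\ 0&0&0\end{pmatrix}.
\end{equation*}
Expanding $\det(J-\lambda I)$ along the first row and then the last row gives $(-a-\lambda)(a-\lambda)(-\lambda)$. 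The eigenvalues are therefore $-a$, $a$ and $0$, which are the values $-\tfrac{1}{\sqrt{1+n}}$, $\tfrac{1}{\sqrt{1+n}}$, $0$ claimed in the lemma.

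Next I solve for the eigenvectors.
\begin{itemize}
\item For $\lambda=a$: the first and third rows force the $u_2$- and $r_2$-components to vanish. The eigenvector is $(0,1,0)^T$.
\item For $\lambda=0$: the first row forces the $u_2$-component to vanish. With the ansatz $(0,y,1)^T$, the second row reads $ay+a=0$, so $y=-1$ and the eigenvector is $(0,-1,1)^T$.
\item For $\lambda=-a$: the third row forces the $r_2$-component to vanish. With the ansatz $(1,y,0)^T$, the second row reads $-\delta+ay=-ay$, so $y=\delta/(2a)$.
\end{itemize}
In the last case, $n>1$ gives $\delta=0$ and the eigenvector is $(1,0,0)^T$. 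For $n=1$ we have $a=\tfrac{1}{\sqrt2}$, so $y=\tfrac{\sqrt2}{2}=\tfrac{1}{\sqrt2}$ and the eigenvector is $\left(1,\tfrac{1}{\sqrt2},0\right)^T$. All of these match the statement.

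There is no real obstacle here. The one step where an error could creep in is reading off the contributions of the nonlinear terms correctly. Specifically, one must check that $r_2^nw_2^2$, $r_2^{1+n}w_2$ and $u_2^{2n}r_2^n$ vanish to at least second order at the origin, while $u_2^n$ is linear exactly when $n=1$.
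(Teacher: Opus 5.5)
Your proposal is correct and is exactly the ``easy calculation'' the paper leaves implicit: linearise \eqref{K_2} at the origin, observe that the term $-u_2^n(1-r_2^nu_2^n)$ contributes $-1$ to $\partial_{u_2}\dot w_2$ only when $n=1$, and read off the eigenpairs. Your Jacobian, characteristic polynomial and eigenvectors all check out, including the $n=1$ component $y=\delta/(2a)=\tfrac{1}{\sqrt2}$.
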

In this chart, there is only one singular limit to consider, as $\varepsilon=r_2^{1+n}$. Taking $r_2\to0$ in \eqref{K_2} gives 
\begin{equation}
    \begin{aligned}
        \dot{u}_2&=-\tfrac{1}{\sqrt{1+n}}u_2,\\
        \dot{w}_2&=\tfrac{1}{\sqrt{1+n}}w_2-u_2^n.
        \label{K_2 SL}
    \end{aligned}
\end{equation}

Writing $\tfrac{dw_2}{du_2}=\sqrt{1+n}\;u_2^{n-1}-\tfrac{w_2}{u_2}$ gives $w_2(u_2)=\tfrac{u_2^n}{\sqrt{1+n}} +\tfrac{\beta}{u_2}$, which we will call $\Gamma_2$. Next, we introduce the following section 
$\Sigma_2^{\rm in}=\left\{\left(\tfrac{1}{\sqrt[1+n]{\rho}}, w_2, r_2\right)\:\bigg|\;(w_2, r_2)\in[0, w_0]\times[0, r_0]\:\right\},$ where 
$\Sigma_{2}^{\rm in}=\kappa_{12}(\Sigma_{1}^{\rm out})$.
In order to determine the constant $\beta$, we compute $P_2^{\rm in}=\kappa_{12}(P_1^{\rm out})=\left(\tfrac{1}{\sqrt[1+n]{\rho}}, \tfrac{1}{\sqrt[1+n]{\rho^n}} \left(\tfrac{1}{\sqrt{1+n}}+\alpha(n)\rho\right), 0\right)$. Taking $\beta=\alpha(n)$ ensures that $\Gamma_1$ and $\Gamma_2$ connect continuously.  

Lemma~\ref{lem: K_2 linear} shows that $Q_2^0$ is a saddle  for $n\geq 1$, and therefore there exists an invariant stable manifold $W_2^{\rm s}(Q_2^0)$, which for $r_2=0$ is given by $w_2(u_2)=\tfrac{u_2^n}{\sqrt{1+n}}$. $W_2^{\rm s}(Q_2^0)$ is a separatrix for trajectories in chart $K_2$. Furthermore, the orbit $\Gamma_2$ lies strictly above $ W_2^{\rm s}(Q_2^0)$ as $\tfrac{\alpha(n)}{ u_2}+\tfrac{u_2^n}{\sqrt{1+n}}>\tfrac{u_2^n}{\sqrt{1+n}}$.

Orbits which intersect $\Sigma_2^{\rm in}$ and have $w_2-$coordinate\;$<\tfrac{1}{\sqrt{1+n}}\tfrac{1}{\sqrt[1+n]{\rho^n}}$ (which is the intersection of $W_2^{\rm s}(Q_2^0)$ and $\Sigma_2^{\rm in}$), will reach $w_2=0$ in finite time. After blow-down $W<0$ and then $V<-\tfrac{1}{\sqrt{1+n}}$, for all $\xi>0$, which contradicts our requirement for a smooth front solution, which is $V=0$ as $\xi\to\infty$.
However, as $w_2^{\rm in}=\tfrac{1}{\sqrt[1+n]{\rho^n}} \left(\tfrac{1}{\sqrt{1+n}}+\alpha(n)\rho\right)>\tfrac{1}{\sqrt{(1+n)}}\tfrac{1}{\sqrt[1+n]{\rho^n}}$, then on $\Gamma_2$, $w_2>\tfrac{u_2^n}{\sqrt{1+n}}$ for all $u_2>0$.

To complete this subsection, we introduce the section 
$\Sigma_2^{\rm out}=\left\{\left(u_2, \tfrac{1}{\delta}, r_2\right)\; \bigg|\:(u_2, r_2)\in[0, u_0]\times[0, r_0]\right\},$ to track $\Gamma_2$ as it leaves the chart $K_2$. The constant $\delta>0$, is a fixed, small constant. We find $P_2^{\rm out}=\Gamma_2\cap\Sigma_2^{\rm out}=\left(u_2^{\rm out}(\delta), \tfrac{1}{\delta}, 0\right)$, where $u_2^{\rm out}(\delta)$ is the smaller root of  $\tfrac{1}{\delta}=\tfrac{1}{\sqrt{1+n}}\left(u_2^{\rm out}(\delta)\right)^n+\tfrac{\alpha(n)}{u_2^{\rm out}(\delta)}$. Note that $u_2^{\rm out}=\alpha(n)\delta+\mathcal{O}(\delta^{2+n})$.
The geometry and dynamics in chart $K_2$ for $n=1$ are shown in Figure~\ref{fig: K_2}.
\begin{figure}
    \centering
    \includegraphics[scale = 1.2]{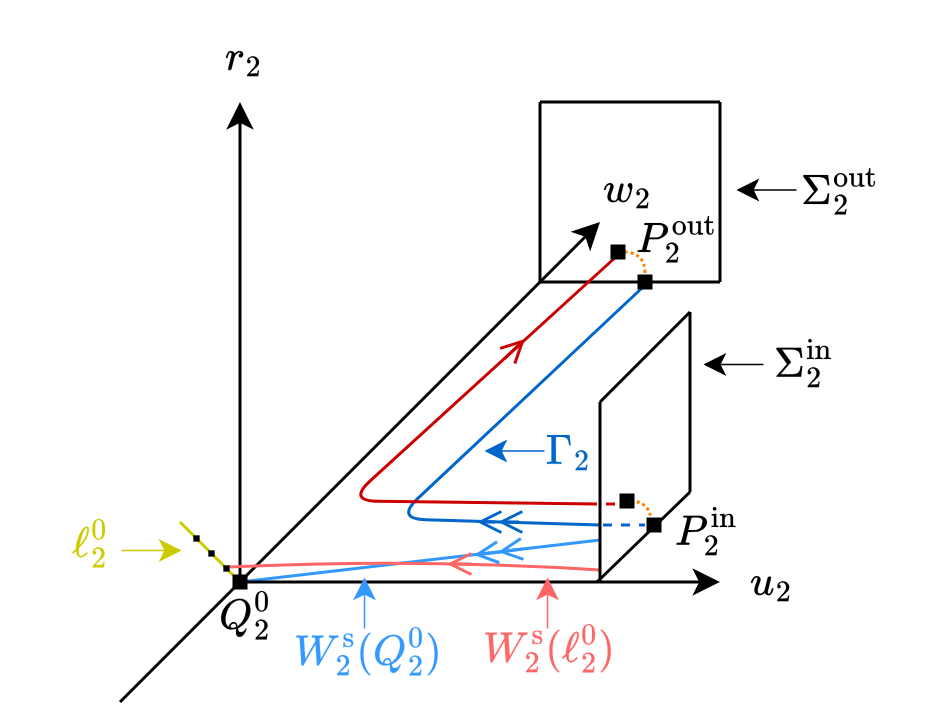}
    \caption{The geometry in chart $K_2$ for $n=1$.}
    \label{fig: K_2}
\end{figure}

We summarise this subsection by highlighting the key idea, which is that $\Gamma_2$ lies strictly above the stable separatrix $W^s(Q_2^0)$.
Thus, $\Gamma_2$ lies in the part of phase-space that exits the rescaling chart towards $K_3$, where it can match to the leading-edge dynamics of the smooth front. In the next subsection, we complete the construction of the singular orbit. 

\subsection{$W-$directional chart $K_3$}
In this subsection, we study the dynamics of \eqref{ode epsilon} in the regime $W=\mathcal{O}(1)$. To that end, we consider the $W-$directional chart $K_3$ \eqref{blow-up in K3}.
Applying \eqref{blow-up in K3} to \eqref{ode epsilon} gives 
\begin{equation}
    \begin{aligned}
        \dot{u}_3&=u_3\left(r_3^n-\tfrac{1}{\sqrt{1+n}}-\tfrac{1}{n}F(u_3, r_3, \varepsilon_3)\right),\\
        \dot{r}_3&=\tfrac{r_3}{n} F(u_3, r_3, \varepsilon_3),\\
        \dot{\varepsilon}_3&=-\tfrac{1+n}{n}\varepsilon_3 F(u_3, r_3, \varepsilon_3),
        \label{K_3}
    \end{aligned}
\end{equation}
where $F(u_3, r_3, \varepsilon_3)=\tfrac{1}{\sqrt{1+n}}-r_3^n+\tfrac{1}{\sqrt{1+n}}r_3\varepsilon_3-r_3^{1+n}\varepsilon_3-u_3^n(1-r_3^nu_3^n)$.
Our goal for this subsection is to complete the singular orbit, constructing the final part $\Gamma_3$.
Initially, we note that \eqref{K_3} admits the line of equilibria $\ell_3^+=\left\{\left(0, \tfrac{1}{\sqrt[2n]{1+n}}, \varepsilon_3\right)\;\big|\varepsilon_3\in[0, \sqrt[2n]{(1+n)^{1+n}}\;\varepsilon_0)\right\}$. For each fixed $r_3^{1+n}\varepsilon_3=\varepsilon$, $\ell_3^+$ corresponds to the point $Q^+$ before blow-up. Furthermore, we identify the point $Q_3^+=\left(0, \tfrac{1}{\sqrt[2n]{1+n}}, 0\right)$ obtained by taking $\varepsilon_3\to0$ on $\ell_3^+$. Additionally, we note that \eqref{K_3} admits a second relevant equilibrium $P_3=(0,0,0)$, which lies over the degenerate point $(U,W,\varepsilon)=(0,0,0)$ in the blown-up space. An easy calculation reveals the following.
\begin{lem}
    The equilibrium $Q_3^+$ has eigenvalues $0, -\tfrac{1}{\sqrt{1+n}}, 0$. For $n=1$, the corresponding eigenvectors are $(1, -1, 0)^T$, $(0,1,0)^T$ and $(0,0,1)^T$; for $n>1$, the eigenvectors are $(1,0,0)^T, (0,1,0)^T$ and $(0,0,1)^T$.
    The equilibrium $P_3$ has the eigenvalues $-\tfrac{\sqrt{1+n}}{n},\tfrac{1}{n\sqrt{1+n}},-\tfrac{\sqrt{1+n}}{n}$. The corresponding eigenvectors are $(1,0,0)^T, (0,1,0)^T$ and $(0,0,1)^T$.
    \label{lem: K_3 lin}
\end{lem}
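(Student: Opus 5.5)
The plan is to compute the Jacobian of \eqref{K_3} directly at each of the two equilibria. The computation simplifies because every equation in \eqref{K_3} carries a factor $u_3$, $r_3$ or $\varepsilon_3$. At a point where that factor vanishes, the corresponding row of the Jacobian reduces to (the bracket or $F$) times a unit vector. So the only quantities needed are the values of $F$ and of its partial derivatives at the two points. From the definition of $F$,
$$\partial_{u_3}F=-nu_3^{n-1}(1-2r_3^nu_3^n),\qquad \partial_{r_3}F=-nr_3^{n-1}+\tfrac{1}{\sqrt{1+n}}\varepsilon_3-(1+n)r_3^n\varepsilon_3+nr_3^{n-1}u_3^{2n},\qquad \partial_{\varepsilon_3}F=\tfrac{1}{\sqrt{1+n}}r_3-r_3^{1+n}.$$

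\emph{The point $Q_3^+$.} Here $r_3^n=\tfrac{1}{\sqrt{1+n}}$ and $u_3=\varepsilon_3=0$, so $F(Q_3^+)=0$. I fill in the rows as follows.
\begin{itemize}
\item The $u_3$-row is $\bigl(r_3^n-\tfrac{1}{\sqrt{1+n}}-\tfrac{1}{n}F,\,0,\,0\bigr)=(0,0,0)$.
\item The $r_3$-row is $\tfrac{r_3}{n}\nabla F$, since the term $\tfrac1nF\,(0,1,0)$ vanishes.
\item The $\varepsilon_3$-row is zero, since both $\varepsilon_3$ and $F$ vanish.
\end{itemize}
In the $r_3$-row, $\partial_{\varepsilon_3}F=r_3\bigl(\tfrac{1}{\sqrt{1+n}}-r_3^n\bigr)=0$ and $\tfrac{r_3}{n}\partial_{r_3}F=-r_3^n=-\tfrac{1}{\sqrt{1+n}}$. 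The entry $\tfrac{r_3}{n}\partial_{u_3}F$ equals $0$ for $n>1$, and equals $-r_3$ for $n=1$.

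The Jacobian is therefore zero except in its middle row, so it is upper-triangular with diagonal $\bigl(0,-\tfrac{1}{\sqrt{1+n}},0\bigr)$, which gives the eigenvalues. For the eigenvectors, $(0,1,0)^T$ is clearly the eigenvector for $-\tfrac{1}{\sqrt{1+n}}$, and $(0,0,1)^T$ lies in the kernel because the $\varepsilon_3$-entry of the middle row vanishes. For $n>1$ the middle row is $\bigl(0,-\tfrac{1}{\sqrt{1+n}},0\bigr)$, so $(1,0,0)^T$ is also in the kernel. For $n=1$ the middle row is $(-r_3,-r_3,0)$, and the kernel vector with $u_3$-component $1$ is $(1,-1,0)^T$.

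\emph{The point $P_3$.} Here $u_3=r_3=\varepsilon_3=0$, so $F(P_3)=\tfrac{1}{\sqrt{1+n}}$. Every off-diagonal entry carries a factor $u_3$, $r_3$ or $\varepsilon_3$ and therefore vanishes, so the Jacobian is diagonal. Its diagonal entries are:
\begin{itemize}
\item in the $u_3$-slot, $-\tfrac{1}{\sqrt{1+n}}-\tfrac{1}{n\sqrt{1+n}}=-\tfrac{\sqrt{1+n}}{n}$;
\item in the $r_3$-slot, $\tfrac1nF=\tfrac{1}{n\sqrt{1+n}}$;
\item in the $\varepsilon_3$-slot, $-\tfrac{1+n}{n}\cdot\tfrac{1}{\sqrt{1+n}}=-\tfrac{\sqrt{1+n}}{n}$.
\end{itemize}
The eigenvectors are the coordinate vectors.

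The only delicate step is the $n=1$ versus $n>1$ split at $Q_3^+$, which comes from whether $u_3^{n-1}$ vanishes at $u_3=0$. Related to this, one must notice that $\partial_{\varepsilon_3}F$ vanishes exactly because $r_3^n=\tfrac{1}{\sqrt{1+n}}$ on $\ell_3^+$; this is what places $(0,0,1)^T$ in the kernel. Everything else is routine.
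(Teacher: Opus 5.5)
Your proposal is correct and follows the same approach as the paper, whose ``easy calculation'' is exactly this direct linearisation of the chart-$K_3$ system at $Q_3^+$ and $P_3$. The $n=1$ versus $n>1$ split comes from $\partial_{u_3}F=-nu_3^{n-1}$ at $u_3=0$, as you say. One cosmetic slip: for $n=1$ the only nonzero off-diagonal entry of the Jacobian at $Q_3^+$ is in position $(2,1)$, so the matrix is lower- rather than upper-triangular, which does not change reading the eigenvalues off the diagonal.
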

As in the chart $K_1$, there are two limiting systems to consider, as $r_3^{1+n}\varepsilon_3=\varepsilon$.
We first take $r_3\to0$, and therefore $F(u_3,0, \varepsilon_3)=\tfrac{1}{\sqrt{1+n}}-u_3^n$, so that \eqref{K_3} becomes
\begin{equation}
    \begin{aligned}
        \dot{u}_3&=\tfrac{u_3}{n}\left(u_3^n-\sqrt{1+n}\right),\\
        \dot{\varepsilon}_3&=-\tfrac{1+n}{n}\varepsilon_3\left(\tfrac{1}{\sqrt{1+n}}-u_3^n\right).
        \label{K_3 r_3=0}
    \end{aligned}
\end{equation}
To solve for $\Gamma_3^-$, we write $\tfrac{d\varepsilon_3}{du_3}=\tfrac{\dot{\varepsilon}_3}{\dot{u}_3}$, giving $\varepsilon_3(u_3)=\gamma u_3\left(1-\tfrac{u_3^n}{\sqrt{1+n}}\right)$.

We introduce the section $\Sigma_3^{\rm in}=\{(u_3, r_3, \delta^{\tfrac{1+n}{n}})|\;(u_3, r_3)\in[0, u_0]\times[0,r_0]\}$, where $\kappa_{23}(\Sigma_2^{\rm out})=\Sigma_3^{\rm in}$, and $\kappa_{23}(P_2^{\rm out})=P_3^{\rm in}=\left(\delta^{\tfrac{1}{n}}u_2^{\rm out}(\delta),
0,
\delta^{\tfrac{1+n}{n}}\right)$, with $u_2^{\rm out}(\delta)$ defined in the previous section.

The matching between $\Gamma_2$ and $\Gamma_3^-$ takes place in the section $\Sigma_3^{\rm in}$, where $\varepsilon_3=\delta^{\tfrac{1+n}{n}}$, therefore, from the definition of $u_2^{\rm out}(\delta)$, $\gamma=\tfrac{1}{\alpha(n)}$. Then, we define the orbit $\Gamma_3^{-}$ by  $\varepsilon_3(u_3)=\tfrac{u_3}{\alpha(n)}\left(1-\tfrac{1}{\sqrt{1+n}}u_3^n\right)$. 
The second limiting system is obtained by taking $\varepsilon_3\to0$ in \eqref{K_3} to obtain 
\begin{equation}
    \begin{aligned}
        \dot{u}_3&=\tfrac{u_3}{n}\left( (1+n)r_3^n-\sqrt{1+n}+u_3^n(1-r_3^nu_3^n)\right),\\ \dot{r}_3&=\tfrac{r_3}{n}\left(\tfrac{1}{\sqrt{1+n}}-r_3^n-u_3^n(1-r_3^nu_3^n)\right).
        \label{K_3 eps_3=0}
    \end{aligned}
\end{equation}
The next result follows from centre-manifold theory and the invariance equation.
\begin{lem}
    The equilibrium $Q_3^+$ admits a centre-manifold $W^{\rm c}(Q_3^+),$ which can be written as a graph $r_3(u_3, \varepsilon_3)$, where $r_3(u_3, 0)=\tfrac{1}{\sqrt[2n]{1+n}}-\tfrac{\sqrt[2n]{(1+n)^{n-1}}}{n}u_3^n+\mathcal{O}(u_3^{1+n})$.
\end{lem}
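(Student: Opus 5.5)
The plan is to apply standard centre-manifold theory to the extended system \eqref{K_3} at $Q_3^+$, and then to fix the Taylor coefficients of the manifold from the invariance equation. Lemma~\ref{lem: K_3 lin} gives the linearisation at $Q_3^+$. Its eigenvalues are $0$ and $0$, with eigenvectors essentially in the $u_3$ and $\varepsilon_3$ directions, and one negative eigenvalue $-\tfrac{1}{\sqrt{1+n}}$ in the $r_3$ direction. For $n=1$ the first zero-eigenvector is $(1,-1,0)^T$, which is still transverse to the stable direction $(0,1,0)^T$. The right-hand side of \eqref{K_3} is polynomial, so the centre-manifold theorem gives, for every finite $k$, a local $C^k$ two-dimensional centre manifold $W^{\rm c}(Q_3^+)$ tangent to the centre eigenspace. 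Because the stable eigenvector is $(0,1,0)^T$ and the centre eigenspace is transverse to it, this manifold can be written as a graph $r_3=r_3(u_3,\varepsilon_3)$ over a neighbourhood of $(u_3,\varepsilon_3)=(0,0)$, with $r_3(0,0)=r^*:=\tfrac{1}{\sqrt[2n]{1+n}}$. The manifold need not be unique, but every choice has the same Taylor expansion, so the expansion is well defined.

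Next I restrict to the invariant plane $\{\varepsilon_3=0\}$, where the flow is \eqref{K_3 eps_3=0}. Here $\varepsilon_3=0$ is invariant and the line $\{u_3=0\}$ is invariant too. Along $\{u_3=0\}$ the function $r_3(0,0)$ must be the equilibrium value $r^*$, and the equation for $r_3$ there has no $u_3$-dependence. The equations depend on $u_3$ only through $u_3^n$, which suggests the ansatz $r_3(u_3,0)=r^*+a\,u_3^n+\mathcal{O}(u_3^{1+n})$. For $n=1$ this ansatz is simply a linear term, and its coefficient must match the tangent direction $(1,-1,0)^T$ from Lemma~\ref{lem: K_3 lin}. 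That gives a useful consistency check: the answer should be $a=-1$ when $n=1$.

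The main step is the invariance equation $\dot r_3=\partial_{u_3}r_3\,\dot u_3$. I expand $F(u_3,r_3,0)=\tfrac{1}{\sqrt{1+n}}-r_3^n-u_3^n+\mathcal{O}(u_3^{2n})$ using $r_3^n=\tfrac{1}{\sqrt{1+n}}+n(r^*)^{n-1}a\,u_3^n+\dots$, which gives
\[
F=-\bigl(n(r^*)^{n-1}a+1\bigr)u_3^n+\text{h.o.t.}
\]
On the other side, the factor multiplying $u_3$ in $\dot u_3$ is $r_3^n-\tfrac{1}{\sqrt{1+n}}-\tfrac{1}{n}F=\mathcal{O}(u_3^n)$. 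Hence $\dot u_3=\mathcal{O}(u_3^{n+1})$, and the right-hand side $\partial_{u_3}r_3\,\dot u_3$ is $\mathcal{O}(u_3^{2n})$. Meanwhile the left-hand side is $\dot r_3=\tfrac{r^*}{n}F+\dots$. Matching the terms of order $u_3^n$ forces $n(r^*)^{n-1}a+1=0$, that is,
\[
a=-\frac{(r^*)^{1-n}}{n}=-\frac{\sqrt[2n]{(1+n)^{n-1}}}{n}.
\]
This is exactly the coefficient claimed in the lemma, and for $n=1$ it reduces to $a=-1$ as expected.

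I expect the delicate point to be bookkeeping rather than anything conceptual. One must check that no term of order lower than $u_3^n$, or strictly between the leading term and $u_3^n$, appears in the expansion; this is why the remainder is stated as $\mathcal{O}(u_3^{1+n})$. One must also justify that the graph is smooth enough to be Taylor-expanded to this order, which the $C^k$ statement for $k\ge n+1$ provides. Finally, I would remark that the $\varepsilon_3$-dependence of the graph is not needed for the lemma, although it follows from the same invariance equation applied to \eqref{K_3}.
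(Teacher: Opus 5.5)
Your proposal is correct and follows the route the paper indicates. You apply the centre-manifold theorem to get a graph $r_3(u_3,\varepsilon_3)$ transverse to the stable direction $(0,1,0)^T$, then use the invariance equation on the invariant plane $\{\varepsilon_3=0\}$; your coefficient $a=-(r^*)^{1-n}/n=-\sqrt[2n]{(1+n)^{n-1}}/n$ is right, including the $n=1$ check against $(1,-1,0)^T$. The step you only flag, that no $u_3^k$ term with $2\le k<n$ occurs, closes by the same matching. If $c_k u_3^k$ were the lowest-order term of $r_3(u_3,0)-r^*$, the left-hand side $\dot r_3$ would contain $-(r^*)^n c_k u_3^k$, while $\partial_{u_3}r_3\,\dot u_3=\mathcal{O}(u_3^{2k})$; this forces $c_k=0$.
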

The curve $u_3(r_3)=0$, which we call $\Gamma_3^{+}$, lies in $W^{\rm u}(P_3)$ and $W_{3}^{\rm s}(Q_3^+)$ and is invariant under the flow of \eqref{K_3 eps_3=0}. We note that in the singular limit $\varepsilon_3=0$, the convergence to $Q_3^+$ follows the stable hyperbolic direction, whereas, for $\varepsilon>0$ ($\varepsilon_3>0$), convergence to $\ell_3^+$ can only be obtained through the centre-manifold $W^{\rm c}(\ell_3^+)$. 
The geometry and dynamics in chart $K_3$ for $n=1$ are shown in Figure~\ref{fig: K_3}.

\begin{figure}[H]
    \centering
    \includegraphics[scale = 1.2]{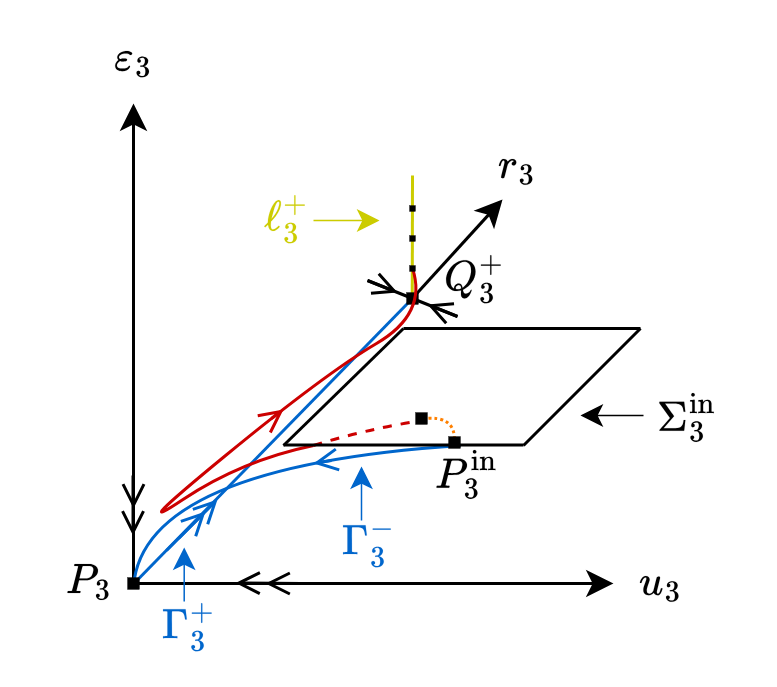}
    \caption{The geometry in chart $K_3$ for $n=1$.}
    \label{fig: K_3}
\end{figure}

\subsection{Proof of Theorem~\ref{thm:1}}
We summarise the previous three subsections with the following result.
\begin{prop}
    For \eqref{K_1}, \eqref{K_2} and \eqref{K_3} there exists a singular heteroclinic orbit $\overline{\Gamma}$, connecting $Q_1^-$ and $Q_3^+$.
\end{prop}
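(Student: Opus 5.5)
The plan is to assemble $\overline{\Gamma}$ as the concatenation $\Gamma_1^-\cup\Gamma_1^+\cup\Gamma_2\cup\Gamma_3^-\cup\Gamma_3^+$. Each piece will be checked to be an orbit (or a union of equilibria and orbits) of the relevant limiting system. We then verify that consecutive pieces share endpoints, either directly in a single chart or after applying the transition maps $\kappa_{12}$ and $\kappa_{23}$. The orbit is understood as a singular heteroclinic orbit in the blown-up space: it is a finite chain of regular orbits joined at partially hyperbolic equilibria lying on the blow-up sphere $\{\bar r=0\}$ or on $\{\bar\varepsilon=0\}$.

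\textbf{Chart $K_1$.} We first show that $\Gamma_1^-=\{w_1=\tfrac{1}{\sqrt{1+n}},\ \varepsilon_1=0,\ r_1\in(0,1)\}$ is invariant for \eqref{K_1 eps_1=0}. This is immediate, since $\sqrt{1+n}w_1-1=0$ and $1-(1+n)w_1^2=0$ on it. On this line $\dot r_1=r_1(r_1^n-1)/\sqrt{1+n}<0$, so $\Gamma_1^-$ runs from $Q_1^-$ to $P_1$. This agrees with the first lemma: $\Gamma_1^-$ is tangent to the unstable eigenvector $(1,0,0)^T$ at $Q_1^-$ and to the stable eigenvector $(1,0,0)^T$ at $P_1$. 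Next we show that $\Gamma_1^+=\{r_1=0,\ w_1=\tfrac{1}{\sqrt{1+n}}+\alpha(n)\varepsilon_1\}$ is invariant for \eqref{K_1, r_1=0}. Along it $\dot\varepsilon_1>0$, so it leaves $P_1$ in backward time, inside the two-dimensional unstable manifold spanned by $(0,1,0)^T,(0,0,1)^T$, and reaches $P_1^{\rm out}\in\Sigma_1^{\rm out}$.

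\textbf{Chart $K_2$.} We map $P_1^{\rm out}$ by $\kappa_{12}$ to $P_2^{\rm in}$ and check directly that $P_2^{\rm in}$ lies on $w_2=\tfrac{u_2^n}{\sqrt{1+n}}+\tfrac{\alpha(n)}{u_2}$ at $u_2=\rho^{-1/(1+n)}$; this is the computation $\beta=\alpha(n)$. We then verify that $\Gamma_2$ solves \eqref{K_2 SL}. Along $\Gamma_2$, $u_2$ decreases monotonically and $w_2\sim\alpha(n)/u_2\to\infty$, and $\Gamma_2$ stays strictly above $W_2^{\rm s}(Q_2^0)$. Hence $w_2$ reaches $1/\delta$ at $P_2^{\rm out}$, with $u_2^{\rm out}$ the smaller root.

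\textbf{Chart $K_3$.} We apply $\kappa_{23}$ and check that $\kappa_{23}(\Gamma_2)$ coincides with $\Gamma_3^-$ on the overlap $\{r_3=0\}$. Using $u_3=u_2w_2^{-1/n}$ and $\varepsilon_3=w_2^{-(1+n)/n}$, one gets $\varepsilon_3/u_3=1/(u_2w_2)=1/\bigl(\alpha(n)+u_2^{1+n}/\sqrt{1+n}\bigr)$, while $u_3^n=u_2^n/w_2$. Substituting shows that the image satisfies $\varepsilon_3=\tfrac{u_3}{\alpha(n)}\bigl(1-\tfrac{u_3^n}{\sqrt{1+n}}\bigr)$ identically, not just at the section. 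This gives the match at $P_3^{\rm in}$ and confirms $\gamma=1/\alpha(n)$. Along $\Gamma_3^-$, \eqref{K_3 r_3=0} gives $\dot u_3<0$ for small $u_3$, so the orbit flows to $(u_3,\varepsilon_3)=(0,0)$, i.e. to $P_3$. It approaches $P_3$ tangentially to the $\varepsilon_3=u_3/\alpha(n)$ direction, which lies in the stable subspace spanned by $(1,0,0)^T,(0,0,1)^T$ from Lemma~\ref{lem: K_3 lin}. Finally, $\Gamma_3^+=\{u_3=0,\varepsilon_3=0\}$ is invariant for \eqref{K_3 eps_3=0} with $\dot r_3=\tfrac{r_3}{n}\bigl(\tfrac{1}{\sqrt{1+n}}-r_3^n\bigr)>0$ for $0<r_3<(1+n)^{-1/(2n)}$. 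It therefore leaves $P_3$ along the unstable eigendirection and converges to $Q_3^+$ along its stable direction $(0,1,0)^T$.

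\textbf{Main obstacle.} The delicate step is the corner equilibria $P_1$ and $P_3$ where two limiting systems meet. There we must argue that the chain "enters along a stable direction and leaves along an unstable direction" in a way consistent with the hyperbolicity from the linearisation lemmas. Since these points are hyperbolic, this reduces to the eigenvalue and tangency checks above. I also expect the identity check under $\kappa_{23}$ to need care with the $n$-th and $(1+n)$-th roots, but it is purely algebraic.
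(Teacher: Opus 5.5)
Your proposal is correct and follows essentially the same route as the paper. The paper likewise builds $\overline{\Gamma}$ by concatenating $\Gamma_1^-$, $\Gamma_1^+$, $\Gamma_2$, $\Gamma_3^-$ and $\Gamma_3^+$ through $P_1$, the transition maps $\kappa_{12}$ and $\kappa_{23}$, and $P_3$. You spell out the invariance, flow-direction and tangency checks that the paper leaves implicit, and you show that $\kappa_{23}(\Gamma_2)$ coincides with $\Gamma_3^-$ identically rather than only at $\Sigma_3^{\rm in}$ (where the paper matches to fix $\gamma=1/\alpha(n)$), which is a slightly stronger check.
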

\begin{proof}
In $K_1$, the orbit $\Gamma_1^-$ connects $Q_1^-$ to $P_1$, while $\Gamma_1^+$ leaves $P_1$ and intersects $\Sigma_1^{\rm out}$ at $P_1^{\rm out}$. The transition map $\kappa_{12}$ maps this point to
$P_2^{\rm in}$, which determines the orbit $\Gamma_2$ in $K_2$. This orbit intersects $\Sigma_2^{\rm out}$ at $P_2^{\rm out}$. Applying $\kappa_{23}$ gives the point $P_3^{\rm in}$ on $\Gamma_3^-$. The orbit $\Gamma_3^-$ then connects to
$P_3$, and $\Gamma_3^+$ connects $P_3$ to $Q_3^+$. Together these orbits form the singular heteroclinic orbit $\overline{\Gamma}$.
\end{proof}

Next, we consider the persistence of $\overline{\Gamma}$ for sufficiently small $\varepsilon>0$.
\begin{prop}
    For every sufficiently small $\varepsilon>0$, \eqref{ode epsilon} admits a (locally) unique heteroclinic orbit connecting $Q^-$ and $Q^+$.
\end{prop}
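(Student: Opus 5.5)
The plan is to show that the singular orbit $\overline{\Gamma}$ from the previous proposition persists for small $\varepsilon>0$. I will follow the standard blow-up route (Krupa--Szmolyan): write the unstable manifold $W^{\rm u}(\ell^-)$ as a family over $\varepsilon$, transport it through the three charts, and show that it meets the centre-stable set of $\ell_3^+$ transversally.

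\textbf{Chart $K_1$.} Work in the extended system \eqref{K_1}, where $\ell_1^-$ is a line of equilibria. By the first lemma of the chart-$K_1$ subsection, each point of $\ell_1^-$ has one unstable eigenvalue $\tfrac{n}{\sqrt{1+n}}$, one stable eigenvalue, and one zero eigenvalue along the line. So $\ell_1^-$ is normally hyperbolic, and its unstable manifold $W_1^{\rm u}(\ell_1^-)$ is a smooth two-dimensional manifold foliated by one-dimensional fibres, one for each value of $\varepsilon$ (equivalently, each value of $r_1^{1+n}\varepsilon_1$). At $\varepsilon_1=0$ it contains $\Gamma_1^-$.

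I then follow $W_1^{\rm u}(\ell_1^-)$ past the hyperbolic point $P_1$. By the same lemma, $P_1$ has eigenvalues $-\tfrac{1}{\sqrt{1+n}}$ in $r_1$ and $\sqrt{1+n}$ (double) in $w_1,\varepsilon_1$. I use a local normal form, or an exchange-lemma/Shilnikov type estimate, near $P_1$. This shows that $W_1^{\rm u}(\ell_1^-)$ reaches $\Sigma_1^{\rm out}$ as a curve $C^1$-close to $\Gamma_1^+$, parametrised by $r_1$, with $w_1=\tfrac{1}{\sqrt{1+n}}+\alpha(n)\rho+\mathcal{O}(r_1)$. This is consistent with the expansion \eqref{w_1(r_1, eps_1)}.

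\textbf{Chart $K_2$.} Apply $\kappa_{12}$. On the invariant slice $r_2=\varepsilon^{1/(1+n)}$, system \eqref{K_2} is a regular $\mathcal{O}(r_2)$ perturbation of \eqref{K_2 SL}. Over the compact $\xi$-interval taking $P_2^{\rm in}$ to $\Sigma_2^{\rm out}$, regular perturbation (continuous dependence on parameters) gives a trajectory $\mathcal{O}(r_2)$-close to $\Gamma_2$.

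The essential geometric input is that $\Gamma_2$ lies strictly above $W_2^{\rm s}(Q_2^0)$ when $r_2=0$. The stable manifold of $\ell_2^0$ depends smoothly on $r_2$, so this strict inequality survives for small $r_2$. The orbit therefore cannot fall to $w_2<0$ and does reach $\Sigma_2^{\rm out}$ near $P_2^{\rm out}$.

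\textbf{Chart $K_3$.} Apply $\kappa_{23}$ to reach $\Sigma_3^{\rm in}$ near $P_3^{\rm in}$. The orbit follows $\Gamma_3^-$ towards the hyperbolic saddle $P_3$. By Lemma~\ref{lem: K_3 lin}, $P_3$ has stable directions $u_3,\varepsilon_3$ and unstable direction $r_3$, and the product $r_3^{1+n}\varepsilon_3=\varepsilon$ is conserved. A second Shilnikov-type passage near $P_3$ then shows that the orbit exits close to $\Gamma_3^+$ along the $r_3$-axis, with $u_3,\varepsilon_3=o(1)$ as $\varepsilon\to0$.

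Near $Q_3^+$, I use the centre-stable structure of $\ell_3^+$: a one-dimensional strong stable direction together with the centre directions $(u_3,\varepsilon_3)$, whose dynamics are governed by $W^{\rm c}(Q_3^+)$ from the preceding lemma. On the centre manifold, $\dot u_3=-u_3\,(\tfrac{1}{\sqrt{1+n}}+\mathcal{O}(r_3,\ldots))\cdots$; more precisely I would compute the reduced flow and check that $u_3\to0$ there (as in the earlier remark, $U'\sim -U/c$). Hence every point of a neighbourhood of $Q_3^+$ with $u_3\ge0$ is attracted to $\ell_3^+$. Consequently the orbit tends to the point of $\ell_3^+$ with the given value of $\varepsilon$, which blows down to $Q^+$.

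\textbf{Uniqueness and the main difficulty.} Local uniqueness follows because $W^{\rm u}(Q^-)$ is one-dimensional for each $\varepsilon$, so there is exactly one orbit leaving $Q^-$ on the relevant side. Monotonicity of $U$ follows from $\dot U=U(W-\tfrac{1}{\sqrt{1+n}})<0$ along the orbit.

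I expect the main obstacle to be the two passages near the hyperbolic points $P_1$ and $P_3$. These lie on the blow-up sphere, and the relevant eigenvalues are resonant: for instance at $P_1$ the double eigenvalue $\sqrt{1+n}$, and at $P_3$ the ratios between eigenvalues. I would handle them by noting that $r\varepsilon$-type products are conserved, then applying a $C^0$ or $C^1$ Shilnikov estimate, or a smooth linearisation after removing resonant terms, as in Krupa--Szmolyan. The aim is to show that the transition maps are well defined and converge to the singular ones as $\varepsilon\to0$.
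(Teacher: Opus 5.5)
Your proposal is correct and follows essentially the same route as the paper: the unique branch of $W^{\rm u}(Q^-)$ is tracked through $K_1$, a separatrix argument above $W_2^{\rm s}$ is used in $K_2$, and a saddle passage at $P_3$ is followed by centre-manifold reduction at $\ell_3^+$; your one addition is an explicit normal-hyperbolicity and Shilnikov justification of the passage near $P_1$, which the paper handles only implicitly through the expansion \eqref{w_1(r_1, eps_1)}. One slip is that the $u_3$-direction at $Q_3^+$ has eigenvalue $0$, so the reduced flow is $\dot u_3=-u_3^{1+n}/(1+\sqrt{1+n}\,\varepsilon)+\mathcal{O}(u_3^{2+n})$, not the linear $-u_3(\tfrac{1}{\sqrt{1+n}}+\dots)$ you sketch (the rate $-U/c$ holds only in $\eta$-time); since the sign is unchanged, your attraction conclusion still holds.
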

\begin{proof}
    In chart $K_1$, the branch of the unstable manifold of $Q_1^-$ selected by the singular orbit is a perturbation of $\Gamma_1$ given by \eqref{w_1(r_1, eps_1)}. Furthermore, it is the unique orbit which is backward asymptotic to $Q_1^-$. It follows that for $\varepsilon>0$, $\hat{P}_1^{\rm out}=\left(\sqrt[1+n]{\tfrac{\varepsilon}{\rho}},\tfrac{1}{\sqrt{1+n}}+\alpha(n)\rho
    \left[
    1-\tfrac{1}{\alpha(n)}
    \sqrt[1+n]{\left(\tfrac{\varepsilon}{\rho}\right)}
    +\tfrac{1+n}{n}
    \sqrt[1+n]{\left(\tfrac{\varepsilon}{\rho}\right)^n}
    +\mathcal{O}(\varepsilon)
    \right],\rho  \right)$ (which is the perturbation of $P_1^{\rm out}$), and therefore, $\hat{P}_2^{\rm in}=\kappa_{12}(\hat{P}_1^{\rm out})=\left(\tfrac{1}{\sqrt[1+n]{\rho}},\tfrac{1}{\sqrt[1+n]{\rho^n}}\left[\tfrac{1}{\sqrt{1+n}}+\alpha(n)\rho
    \left[
    1-\tfrac{1}{\alpha(n)}
    \sqrt[1+n]{\left(\tfrac{\varepsilon}{\rho}\right)}
    +\tfrac{1+n}{n}
    \sqrt[1+n]{\left(\tfrac{\varepsilon}{\rho}\right)^n}
    +\mathcal{O}(\varepsilon)
    \right]\right],\sqrt[1+n]{\varepsilon}  \right)$.

    In chart $K_2$, recall that in the singular limit ($r_2=0$), the stable manifold of $Q_2^0$, $W_2^{\rm s}(Q_2^0)$ forms a separatrix for trajectories in $K_2$. Since $\Gamma_2$ lies a positive distance above $W_2^{\rm s}(Q_2^0)$ and since the stable manifold of the nearby saddle $\ell_2^0=(0, -\sqrt[1+n]{\varepsilon}, \sqrt[1+n]{\varepsilon})$ depends smoothly on $r_2$, the perturbed trajectory remains above $W_2^{\rm s}(\ell_2^0)$ for all sufficiently small $r_2>0$.
    As a result, the exit point from chart $K_2$ $\hat{P}_2^{\rm out}$ is a regular perturbation of the same point obtained in the singular limit $P_2^{\rm out}$.  
    In chart $K_3$ recall that the singular orbit $\Gamma_3=\Gamma_3^-\cup\Gamma_3^+$, passes through the equilibrium $P_3$. For $\varepsilon>0$ the standard local passage near a hyperbolic saddle (see for example \cite{POPOVIC20121976}) implies that the perturbation of $\Gamma_3=\Gamma_3^-\cup\Gamma_3^+$ enters a neighbourhood of $\ell_3^+$, close to the singular orbit $\Gamma_3^+$. Finally, recall from Lemma~\ref{lem: K_3 lin} that $\ell_3^+$ has a one-dimensional stable manifold and a two-dimensional centre-manifold, where one of these centre directions is due to $\ell_3^+$ being a line of equilibria. Then, convergence to $\ell_3^+$ is obtained by reducing the dynamics of \eqref{K_3} to the centre-manifold $r_3=\tfrac{1}{\sqrt[2n]{1+n}}-\tfrac{\sqrt[2n]{(1+n)^{n-1}}}{n(1+\sqrt{1+n}\;\varepsilon)}u_3^n+\mathcal{O}(u_3^{1+n})$ defined on the invariant surface $r_3^{1+n}\varepsilon_3=\varepsilon$, where $\dot{u}_3=-\tfrac{u_3^{1+n}}{1+\sqrt{1+n}\;\varepsilon}+\mathcal{O}(u_3^{2+n})$.
\end{proof}
This completes the proof of Theorem~\ref{thm:1}.

\section{Discussion}
In this paper, we have proven that the family of reaction-diffusion equations \eqref{pde1} admits smooth front solutions for wave speeds $c=\tfrac{1}{\sqrt{1+n}}+\varepsilon$, with $\varepsilon$ sufficiently small and $n$ a positive integer. The key to our proof is that the correct blow-up scaling \eqref{blow-up} reveals the special singular orbit $\Gamma_2$. The central observation is that $\Gamma_2$ lies strictly above the stable manifold separatrix $W_2^{\rm s}(Q_2^0)$ (which corresponds to the sharp front solutions), and therefore exits into chart $K_3$, where $W=r_3>0$. From here, smooth wave front solutions can be obtained.

Unlike the method of matched asymptotics used by Sherratt \cite{SHERRATT199633,Sherratt2010} in the $n=1$ case, we do not calculate a composite solution for $u(x-ct)$; however, using a normal-form calculation in a neighbourhood of the resonant saddles $P_1$ and $P_3$ 
the leading-order asymptotics for $U(\xi)$ can be obtained. Furthermore, due to the simple nature of the vector fields in the charts, an exact singular orbit can be obtained, leading to a computationally simple proof of the existence of smooth fronts. The benefit of our method is that we are able to treat the whole family of equations \eqref{pde1} in a single calculation. We note that a matched asymptotics approach to this problem for general positive integers $n$ may not be computationally feasible.

A natural direction for future work is to quantify the difference between the smooth
front ($\varepsilon>0$) and the sharp front ($\varepsilon=0$). For $n=1$, it was shown
in \cite{Sherratt2010} that the maximum difference between the smooth-
and sharp-front waves is $\mathcal{O}(\varepsilon\log\varepsilon)$ as $\varepsilon\to 0$.
This difference is caused by logarithmic switchback terms \cite{NPopovic_2005}, which are characteristic of
singular perturbation problems. In our geometric framework, such terms arise naturally
from the passage near the resonant saddles $P_1$ and $P_3$, and so our approach may be
well suited to recovering and generalising this result to all $n\geq 1$.

More broadly, our approach may be applicable to travelling fronts with genuine jump
discontinuities, or ``shock-fronted'' waves. Such solutions arise in models of chemotactic and diffusive cellular migration, where smooth and discontinuous travelling waves exist \cite{Landman2003, Landman2005}. As in the present work, the discontinuity in these problems reflects a degeneracy in the diffusive flux. Geometric desingularisation may offer a systematic way to resolve these discontinuities and to characterise the 
smooth-to-shock transitions.

\section*{Acknowledgements}
The author would like to thank Nikola Popovi\'c for numerous helpful discussions, and
Jonathan Sherratt for providing valuable feedback. The author was supported by the EPSRC Centre for Doctoral Training in Mathematical Modelling, Analysis and Computation (MAC-MIGS) funded by the UK Engineering and Physical Sciences Research Council (EPSRC) grant EP/S023291/1, Heriot-Watt University, and the University of Edinburgh.

\addcontentsline{toc}{section}{References}
\nocite{*}
\printbibliography
\end{document}